\documentclass[amscd,amssymb,11pt]{amsart}

\usepackage[all]{xy}

\newtheorem{thm}{Theorem}[section]
\newtheorem{lem}[thm]{Lemma}
\newtheorem{cor}[thm]{Corollary}
\newtheorem{prop}[thm]{Proposition}
\newtheorem{conj}[thm]{Conjecture}

\theoremstyle{definition}

\newtheorem{defn}[thm]{Definition}
\newtheorem{defns}[thm]{Definitions}

\newtheorem{notation}[thm]{Notation}

\newtheorem{exs}[thm]{Examples}

\theoremstyle{remark}

\newtheorem{rem}[thm]{Remark}
\newtheorem{rems}[thm]{Remarks}

\numberwithin{equation}{section}

\newcommand{\thmref}[1]{Theorem~\ref{#1}}
\newcommand{\corref}[1]{Corollary~\ref{#1}}
\newcommand{\secref}[1]{\S\ref{#1}}

\newcommand{\propref}[1]{Proposition~\ref{#1}}
\newcommand{\lemref}[1]{Lemma~\ref{#1}}

\newcommand{\hocolim}{\operatorname*{hocolim}}

\newcommand{\Map}{\operatorname{Map}}

\newcommand{\A}{{\mathcal  A}}

\newcommand{\C}{{\mathcal  C}}

\newcommand{\U}{{\mathcal  U}}

\newcommand{\Z}{{\mathbb  Z}}

\newcommand{\R}{{\mathbb  R}}

\newcommand{\Sinfty}{\Sigma^{\infty}}
\newcommand{\Oinfty}{\Omega^{\infty}}

\newcommand{\sm}{\wedge}

\newcommand{\ra}{\rightarrow}
\newcommand{\xra}{\xrightarrow}

\begin{document}

\title[Nonrealization results via Goodwillie towers]{Topological nonrealization results via the Goodwillie tower approach to iterated loopspace homology}

\author[Kuhn]{Nicholas J.~Kuhn}
\address{Department of Mathematics \\ University of Virginia \\ Charlottesville, VA 22904}
\email{njk4x@virginia.edu}
\thanks{This research was partially supported by grants from the National Science Foundation}

\date{June 19, 2008.}

\subjclass[2000]{Primary 55S10; Secondary 55S12, 55T20}

\begin{abstract}  We prove a strengthened version of a theorem of Lionel Schwartz that says that certain modules over the Steenrod algebra cannot be the mod 2 cohomology of a space.  What is most interesting is our method, which replaces his iterated use of the Eilenberg--Moore spectral sequence by a single use of the spectral sequence converging to $H^*(\Omega^nX;\Z/2)$ obtained from the Goodwillie tower for $\Sinfty \Omega^n X$.  Much of the paper develops basic properties of this spectral sequence.

\end{abstract}

\maketitle

\section{Introduction and main results} \label{introduction}

In this article, I prove some constraints on the mod 2 cohomology of spaces in which Steenrod squares `jump over gaps'.  Said otherwise, for certain unstable $\A$--modules $M$ with operations jumping over gaps, there are no spaces $X$ having $\tilde H^*(X;\Z/2) \simeq M$.  Here $\A$ is the mod 2 Steenrod algebra, and a module $M$ is unstable if $Sq^sx = 0$ for all $x \in M$ and $s > |x|$.

In \cite{schwartz}, Lionel Schwartz established an interesting theorem of this type. The structure of his proof went as follows. Given $M \in \U$ of a specified sort, one wishes to show that no space $X$ exists with $H^*(X;\Z/2) \simeq M$.  Assuming the existence of such an $X$, he showed that there could be no unstable {\em algebra} structure compatible with the $\A$--module structure on $H^*(\Omega^n X;\Z/2)$, where $n$ is a number determined by $M$.  Here we recall that an unstable algebra satisfies both the Cartan formula, $Sq^k(x\cup y) = \sum_{i+j=k} Sq^ix \cup Sq^jy$, and the Restriction axiom, $Sq^{|x|}x = x^2$.

The essence of his argument is elegant, and makes clever use of the product structure in the Eilenberg--Moore Spectral Sequence for computing $H^*(\Omega X;\Z/2)$, in conjunction with the structural form of the Adem relation for $Sq^{2^k}Sq^{2^k}$.  Less elegant is his $n$--fold iterated use of the EMSS, necessitating inductive bookkeeping arguments.

The main point of our paper here is to give a new proof of Lionel's theorem, keeping the `fun' parts of his proof, but just using a single spectral sequence: the one associated to the Goodwillie tower for the functor sending a space $X$ to the spectrum $\Sinfty \Omega^n X$.  Our proof ultimately
\begin{itemize}
\item yields a strengthened version of Lionel's theorem,
\item gives some geometric meaning to what is being studied (the 2nd stage of the tower), and
\item illustrates the efficacy of using Goodwillie towers to study classical questions.
\end{itemize}

To state our main theorem we need to describe some unstable $\A$--modules.  Inside $H^*(B\Z/2;\Z/2) = \Z/2[t]$, the $\A$--module $\A \cdot t$ has basis $\{t,t^2,t^4, \dots \}$, with $Sq^{2^k}t^{2^k} = t^{2^{k+1}}$.  For $k < l$, let $\Phi(k,l) \in \U$ be the subquotient with basis $\{t^{2^k}, \dots, t^{2^l} \}$.

The modules we will be concerned with have the form $M \otimes \Phi(k,k+2)$, where $M$ is an unstable $\A$--module concentrated in degrees between $c$ and $e$. If $2^k > e-c$, then this unstable module is three copies of $M$, with $Sq^{2^k}$ sending the first copy isomorphically to the second, and $Sq^{2^{k+1}}$ sending the second copy to the third.

In formulae, the $\A$--module structure is described as follows: given $x \in M$ and $0\leq s \leq e-c$,
$ Sq^{s}(x \otimes t^{2^i}) = Sq^s(x) \otimes t^{2^i}$,
$ Sq^{s+2^k}(x \otimes t^{2^k}) = Sq^s(x) \otimes t^{2^{k+1}}$, and
$ Sq^{s+2^{k+1}}(x \otimes t^{2^{k+1}}) = Sq^s(x) \otimes t^{2^{k+2}}$.

In pictures, $M \otimes \Phi(k,k+2)$ looks like
\begin{equation} \label{M tensor Phi picture}
\xymatrix{
 *+[F]{{\hspace{.1in}}M_0{\hspace{.1in}}} \ar@/^1pc/[rr]^{Sq^{2^k}}_{\sim} && *+[F]{{\hspace{.1in}}M_1{\hspace{.1in}}} \ar@/^1pc/[rrrr]^{Sq^{2^{k+1}}}_{\sim} &&&& *+[F]{{\hspace{.1in}}M_2{\hspace{.1in}}}  }
\end{equation}

\noindent where $M_i$ is the $2^{k+i}$th suspension of $M$.

Our main theorem goes as follows.

\begin{thm} \label{main theorem} Let $M$ be an unstable $\A$--module concentrated in degrees between $c$ and $e$, such that its desuspension $\Sigma^{-1}M$ is {\em not} unstable.  Suppose there exists a space $X$ such that $\tilde H^*(X;\Z/2) \simeq \Sigma^n M \otimes \Phi(k,k+2)$.  \\

\noindent{\bf (a)} \ If $n=0$, then $2^k \leq e-c$ must hold. \\

\noindent{\bf (b)} \ If $n>0$, then $2^k \leq 4e-2c + 2n$ must hold.  If, in addition, cup products are trivial in $\tilde H^*(X;\Z/2)$, then $2^k \leq 4e-2c + 2n-2$ must hold.
\end{thm}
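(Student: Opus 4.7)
Pick $x \in M$ with $Sq^{|x|}x \neq 0$, which exists because $\Sigma^{-1}M$ is not unstable. Since $Sq^{|x|}x \in M$ and $M$ is concentrated in degrees at most $e$, we necessarily have $|x| \leq e/2$. This $x$ is the non-triviality witness that will drive the contradiction in both parts.

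For part (a), where $n=0$, we argue directly in $\tilde H^*(X;\Z/2) \simeq M \otimes \Phi(k,k+2)$ via the hypothesized unstable algebra structure. When $2^k > e-c$, the three blocks $M_0, M_1, M_2$ occupy disjoint ranges of degrees, and many Steenrod operations vanish on $M \otimes \Phi(k,k+2)$ for purely module-theoretic reasons (for example, $Sq^{2^k}$ annihilates all of $M$). Following Schwartz's template, we combine the module formulas, the Cartan formula $Sq^{2j}(y^2) = (Sq^j y)^2$, the restriction axiom $Sq^{|y|} y = y^2$, and the Adem relation for $Sq^{2^k} Sq^{2^k}$, applied to a class $y$ built from $x \otimes t^{2^k}$. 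The module-theoretic vanishings force one side of the resulting identity to be zero, while the restriction axiom applied iteratively and fed by $Sq^{|x|}x \neq 0$ forces the other side to be nonzero.

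For part (b), where $n > 0$, we pass to $\Omega^n X$ and exploit the Goodwillie spectral sequence for $\Sinfty \Omega^n X$ converging to $H^*(\Omega^n X; \Z/2)$. The $1$st-stage cofiber is $\Sigma^{-n} \Sinfty X$, contributing a full copy of $M \otimes \Phi(k,k+2)$ with its $\A$-module structure at filtration $1$ of the $E_\infty$-page; the $2$nd-stage layer is a $\Sigma_2$-quadratic construction on $\Sigma^{-n} \Sinfty X$ that captures the squares of filtration-$1$ classes. Choose the filtration-$1$ class $y$ lifting $x \otimes t^{2^{k+1}}$, with $|y| = |x| + 2^{k+1}$. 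The module structure gives $Sq^{|y|} y = Sq^{|x|}(x) \otimes t^{2^{k+2}} \neq 0$ modulo higher filtration. By the restriction axiom in $H^*(\Omega^n X)$, this equals $y^2$, which lies in filtration $\geq 2$. A degree comparison with the top degree of the $2$nd-stage's cohomology (roughly twice the top of $\Sigma^{-n} \tilde H^*(X)$, sharpened by $2$ when cup products on $X$ are trivial so that the construction is further restricted) forces $y^2 = 0$ once $2^k$ exceeds $4e - 2c + 2n$ (respectively $4e - 2c + 2n - 2$), contradicting $Sq^{|y|} y \neq 0$.

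The main obstacle is setting up the Goodwillie spectral sequence with its full multiplicative and Steenrod-algebra structure and obtaining a sharp description of the $2$nd-stage layer as a $\Sigma_2$-quadratic construction with explicit degree bounds. This technical framework — developed in the body of the paper — is what enables a single spectral sequence to do the work of Schwartz's iterated EMSS, and the numerical bounds in the statement come directly from the degree analysis of the $2$nd-stage construction.
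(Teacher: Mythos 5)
Your proposal correctly identifies the overall strategy (find a class whose square is forced to be nonzero by the restriction axiom, and use the Goodwillie spectral sequence for $H^*(\Omega^nX)$ in part (b)), but it misses the key mechanism the paper uses and, as written, the argument for part (b) does not close up.

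For part (a), the paper's argument is more elementary than you suggest: no Adem relation is needed. One sets $a = x\otimes t^{2^k}$, $b = x\otimes t^{2^{k+1}}$, and computes $Sq^{2^k}(a\cup b) = b\cup b = Sq^{|b|}b = Sq^d(x)\otimes t^{2^{k+2}}\neq 0$ using only the Cartan formula, the restriction axiom, and the stated module structure; hence $a\cup b\neq 0$, but $|a\cup b| = 2d+3\cdot 2^k$ lies strictly between $e+2^{k+1}$ and $c+2^{k+2}$ when $2^k>e-c$, so it is in the gap between $M_1$ and $M_2$ -- a contradiction. Your mention of the $Sq^{2^k}Sq^{2^k}$ Adem relation here reflects Schwartz's iterated EMSS argument, not the paper's streamlined one.

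For part (b), there is a genuine gap. You take the filtration-one class $y=\beta$ lifting $b = x\otimes t^{2^{k+1}}$, note $Sq^{|y|}y = \beta^2$, and assert that ``a degree comparison with the top degree of the second-stage's cohomology forces $y^2 = 0$.'' That is false: $|\beta^2| = 2d+2^{k+2}$ lies squarely in the occupied range $[c+2^{k+2}, e+2^{k+2}]$ of the $-1$ column (and $b*b=0$ in the $-2$ column forces $\beta^2$ into filtration one anyway), so no degree bound on the Goodwillie layers kills $\beta^2$. Indeed $\beta^2 = Sq^d\gamma$ \emph{is} nonzero; the contradiction cannot come from $\beta^2$ alone. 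What the paper actually does is introduce the filtration-two class $\delta$ represented by $\hat Q_0(a)$ on the second Goodwillie layer, use the Nishida relation to show $Sq^{2^k}\delta = \alpha\cup\beta$, deduce $Sq^{2^k}Sq^{2^k}\delta = \beta^2 = Sq^d\gamma\neq 0$ (after checking that $Sq^dc$ cannot be hit by a differential), and then invoke Schwartz's Adem-style factorization $Sq^{2^k}Sq^{2^k}\in \A(k-1)Sq^{2^k}\A(k-1)$ together with a gap analysis in $F^{-2}H^*(\Omega^nY)$ to conclude $Sq^{2^k}Sq^{2^k}\delta = 0$ when $2^k>4e-2c+2n-2$. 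Your write-up omits $\hat Q_0(a)$, the Nishida relation, and the Adem factorization entirely, and these are precisely the ingredients that make the numerical bound appear; they cannot be replaced by the degree comparison you describe.
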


By constrast, Schwartz' theorem \cite[Thm.0.2]{schwartz} just says that, for all $n$, $2^k \leq 12(e+n)$ must hold.  If, in addition, cup products are trivial in $\tilde H^*(X;\Z/2)$, then $2^k \leq 12(e+n-1)$ must hold.

We note that the first statement in part (b) of the theorem (and also in Schwartz' theorem as just presented) is implied immediately by the second statement: if $X$ realizes $\Sigma^n M \otimes \Phi(k,k+2)$, then $\Sigma X$ realizes $\Sigma^{n+1} M \otimes \Phi(k,k+2)$, and cup products will be trivial in $\tilde H^*(\Sigma X;\Z/2)$.

\begin{exs} \label{examples} The theorem appears to be reasonably delicate.

Let $M = \Z/2$, so that $c=e=0$.

When $n=0$, part (a) tells us the obvious fact that $\Phi(k,k+2)$ can't be realized for all $k\geq 0$.

When $n=1$, part (b) tells us that $\tilde H^*(X;\Z/2) \simeq \Sigma \Phi(k,k+2)$ only if $k=0$ and the cohomology ring satisfies Poincar\'e duality in dimension 5.  This does in fact happen, when $X = SU(3)/SO(3)$.

When $n=2$, part (b) tells us that $\tilde H^*(X;\Z/2) \simeq \Sigma^2 \Phi(k,k+2)$ only if $k\leq 1$.  In \secref{example section}, we will look a bit more carefully at the proof of part (b) in this case, and we will show that $\tilde H^*(X;\Z/2) \simeq \Sigma^2 \Phi(1,3)$ only if the cohomology ring satisfies Poincar\'e duality in dimension 10.  This does in fact happen: a direct construction of a space with this cohomology was outlined by B.Gray on the AlgTop Discussion List\footnote{See his message of June 6, 2008, at https://lists.lehigh.edu/pipermail/algtop-l/.}.
\end{exs}

\begin{rems} {\bf (a)}  The most famous result of `mind the gap' type is due to J.F.Adams \cite{adams}, and applies to spectra as well as spaces: if $k\geq 4$, $x\in H^d(X;\Z/2)$, and $H^{d+i}(X;\Z/2) = 0$ for $0\leq i \leq 2^k - 2^{k-2}$, then $Sq^{2^k}x$ must be in the image of $Sq^1$.

{\bf (b)}  My interest in such questions goes back to my 1994 study \cite{k1} of spaces $X$ having $H^*(X;\Z/2)$ finitely generated over the mod 2 Steenrod algebra $\A$.  Using Adams' theorem, I proved that, under the extra hypothesis that $Sq^1$ acted trivially in high degrees, $H^*(X;\Z/2)$ would then have to be a finite dimensional $\Z/2$--vector space.  Furthermore, without the extra $Sq^1$ hypothesis, the conjecture that this would still be true was reduced to various questions about the nonrealizability of various sorts of unstable $\A$--modules having operations jumping over gaps. In response to my paper, Lionel formulated and proved his theorem, as it suffices to prove my conjecture \cite[\S 1]{schwartz}: see Appendix \ref{appendix} for a short discussion about how this goes.

{\bf (c)}  A much stronger qualitative theorem is conjecturally true.  The following is a restatement of the Local Realization Conjecture of \cite{k1}.

\begin{conj}  Let $M$ be an $\A$--module concentrated in a finite number of degrees.  Then for $k>>0$, there is no space or spectrum $X$ with
$$ H^*(X;\Z/2) \simeq M \otimes \Phi(k,k+1).$$
\end{conj}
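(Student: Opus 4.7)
The plan is to adapt the Goodwillie tower approach developed in the present paper to the two--generator module $M \otimes \Phi(k,k+1)$. The essential obstacle is that $\Phi(k,k+1)$ has only the single nontrivial Steenrod square $Sq^{2^k}$ connecting its two copies of $M$, so the trick driving the proof of Theorem \ref{main theorem} --- exploiting the Adem relation for $Sq^{2^k}Sq^{2^k}$ via the product structure on the second layer of the Goodwillie tower for $\Sinfty \Omega^n X$ --- is not directly available. Any proof will therefore have to bring in a qualitatively new ingredient beyond the second stage of the tower.

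Assuming a realization $X$ exists with $\tilde H^*(X;\Z/2) \simeq M \otimes \Phi(k,k+1)$ for $k$ arbitrarily large, my first step would be to pick $n$ large enough (in terms of the range of $M$ and of $k$) that $\Omega^n X$ is highly connected and the Goodwillie tower spectral sequence for $\Sinfty \Omega^n X$ converges rapidly. One would then examine stage $d$ of the tower, with $d$ allowed to grow with $k$, and try to exhibit an $\A$--module generator in $H^*(\Omega^n X;\Z/2)$ whose presence is incompatible with the extremely sparse $\A$--module structure forced by the two--copy shape of $M \otimes \Phi(k,k+1)$. The hope is that higher Dyer--Lashof--style operations contributed by the layers of the tower beyond the second --- never needed in the present paper --- will be forced to produce nonzero classes in degrees where $H^*(X;\Z/2)$ must vanish, yielding the required contradiction.

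The hardest step, and almost certainly the reason this conjecture remains open, will be controlling the higher--stage differentials in the Goodwillie spectral sequence. Beyond the second layer these differentials are governed by homotopy operations that are not cleanly described at the chain level, so a purely algebraic determination of $E_\infty$ appears out of reach with current technology. A promising auxiliary tool is Lannes' $T$--functor, which interacts transparently with $\Phi(k,k+1)$ and detects realizability obstructions at the level of unstable $\A$--algebras; pairing a $T$--functor computation with the Goodwillie filtration may supply the leverage that the present paper's second--stage argument falls short of providing. For the spectrum case of the conjecture, one would additionally hope that Adams' `mind the gap' theorem, iterated along the sparse cohomology of $X$, handles a substantial portion of the work on its own.
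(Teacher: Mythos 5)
The item you were asked to prove is labeled a \emph{conjecture} in the paper---it is a restatement of the Local Realization Conjecture of \cite{k1}---and the paper offers no proof of it; the remark in which it appears is there precisely to emphasize that it is strictly stronger than anything the paper's methods (or Schwartz's) currently establish. So there is no ``paper's own proof'' to compare your argument against, and the correct assessment of any submitted proof is simply that, as far as the paper is concerned, the statement is open.

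Your proposal is, appropriately, not a proof, and you say so. You correctly diagnose the structural obstacle: $\Phi(k,k+1)$ joins its two copies of $M$ by a single $Sq^{2^k}$, so the Adem relation $Sq^{2^k}Sq^{2^k}\in\A(k-1)Sq^{2^k}\A(k-1)$---the engine driving the proof of \thmref{main theorem} here via the $*$--product on the second Goodwillie layer, and driving Schwartz's original EMSS argument for $\Phi(k,k+2)$---has nothing to bite on. You also correctly identify the failure mode of the na\"ive extension: past the second stage the differentials and the interaction with the unstable $\A$--module structure are not under explicit control, which is exactly why the paper stops where it does. Your concluding suggestions (higher layers of the tower, Lannes' $T$--functor, iterated use of Adams' gap theorem) are reasonable things to try, and indeed the paper's Remarks gesture toward the first and third; but note that Adams' theorem only forces $Sq^{2^k}x$ into the image of $Sq^1$, which by itself does not contradict the shape of $M\otimes\Phi(k,k+1)$, and the $T$--functor remarks are a hope rather than an outline. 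None of this should be mistaken for a reduction of the conjecture to known results; your own closing acknowledgment that the conjecture remains open is the accurate summary.
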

In pictures, $M \otimes \Phi(k,k+1)$ looks like
\begin{equation*}
\xymatrix{
 *+[F]{{\hspace{.1in}}M_0{\hspace{.1in}}} \ar@/^1pc/[rr]^{Sq^{2^k}}_{\sim} && *+[F]{{\hspace{.1in}}M_1{\hspace{.1in}}}  }
\end{equation*}

\noindent where $M_i$ is the $2^{k+i}$th suspension of $M$.

{\bf (d)} Statement (a) of the theorem admits a simple straightforward proof that avoids all spectral sequences.  Our proofs of both parts will make clear that many other modules are ruled out for topological realization besides those explicitly appearing in the theorem. (The same comment could be made about Schwartz's paper.)  There is also a hint, in our discussion of realizing $\Sigma^2\Phi(1,3)$ in \secref{example section}, that more systematic use of Nishida relations might rule out new classes of modules.

{\bf (e)} Schwartz' theorem holds for all primes.  Thus far, we have only worked out the details with mod 2 coefficients, but our work here can certainly be modified for odd primes.  It similarly seems likely that our methods here will lead to streamlined proofs of the various other related nonrealization theorems that Schwartz and his students have proved  \cite{schwartz2, dehon-gaudens}.  By using the single Goodwillie tower spectral sequence in our argument here, we have been able to make more delicate use of the unstable module structure of $M$ than does Schwartz, and the author expects that subtle questions about how the Nilpotent and Krull filtrations of $\U$ are reflected as one passes from $H^*(X;\Z/p)$ to $H^*(\Omega^n X;\Z/p)$ can be best approached using our techniques.
\end{rems}

\begin{notation} We use the following notation.  $H^*(X)$ will mean $H^*(X;\Z/2)$. If $x \in M$ is an element of a graded vector space, then $\sigma x$ is the corresponding element of the suspended vector space $\Sigma M$.  If $X$ is a space, we will write $\Sigma^{-n}X$ for the desuspended suspension spectrum $\Sigma^{-n}\Sinfty X$.  As in \cite{may}, $\C(n,j)$ denotes the space of $j$ little $n$-cubes in a big $n$--cube.  This has a free action by the $j$th symmetric group $\Sigma_j$, and, for $X$ a space or spectrum, we let $D_{n,j}X = (\C(n,j)_+ \sm X^{\sm j})_{h \Sigma_j}$.  Note that $D_{1,j}X \simeq X^{\sm j}$ and $D_{\infty,j}X \simeq X^{\sm j}_{h \Sigma_j}$. By convention, $D_{n,0}X = S^0$ for all $n$ and $X$.
\end{notation}

The rest of the paper is organized as follows.  For much of it - sections \secref{tower section}, \secref{cohomology of extended powers}, and \secref{d1 section}, supported by Appendix \ref{op proofs} - we describe some of the general properties of the spectral sequence for computing $H^*(\Omega^n X)$.  Assuming this material, the proof of \thmref{main theorem} is satisfyingly short, and given in \secref{main theorem proof}.  Illustrating the methods of our proof, in \secref{example section} we look more carefully at how things go when $\tilde H^*(X) \simeq \Sigma^2 \Phi(1,3)$.

A version of our argument here has been known by the author for nearly a decade; indeed, I gave a talk on `A simple proof of Schwartz' non--realization theorem' at the Midwest Topology conference of October, 23, 1999.  I apologize for the delay in writing this up, and plead that this project led me to become infatuated with Goodwillie towers\footnote{To be honest, the needed geometric details underpinning the spectral sequence used here were only worked out later in joint work with S.Ahearn \cite{ak}}.  I am happy to be reunited with an earlier love: the category $\U$.

The author would like to thank Mark Mahowald and Brayton Gray for aid with Examples \ref{examples}.

\section{The Arone--Goodwillie tower of $\Sinfty \Omega^n X$} \label{tower section}

For $n<\infty$, one has a functor sending a based space $X$ to the suspension spectrum $\Sinfty \Omega^n X$. For $n=\infty$, one similarly has a functor sending a spectrum $X$ to the spectrum $\Sinfty \Oinfty X$. In either case, T.~Goodwillie's general theory of the calculus of functors \cite{goodwillie1,goodwillie2,goodwillie3} yields natural towers of fibrations
\begin{equation*}
\xymatrix{&&& \vdots \ar[d] \\
&&& P^n_3(X) \ar[d] \\
&&& P^n_2(X) \ar[d] \\
\Sinfty \Omega^nX \ar[rrr]^-{\epsilon_1}  \ar[urrr]^-{\epsilon_2} \ar[uurrr]^-{\epsilon_3} &&& P^n_1(X),}
\end{equation*}
such that the connectivity of the maps $\epsilon_j$
increases linearly with $j$ as long as $X$ is $n$--connected if $n<\infty$, and is $0$--connected if $n = \infty$.

Using G.~Arone's explicit model for this tower \cite{arone}, properties of these towers were explored in \cite{ak}.

For $n<\infty$, the spectrum $P^n_1(X)$ identifies with the spectrum $\Sigma^{-n}X$, so that $\epsilon_1$ corresponds to the evaluation map
$$ \Sinfty \Omega^n X \ra \Sigma^{-n}X,$$
and the fiber of the map $P_j^n(X) \ra P_{j-1}^n(X)$ is naturally equivalent to the spectrum $ D_{n,j}\Sigma^{-n}X$. Similarly, when $n=\infty$, the $j$th fiber is equivalent to $D_{\infty, j}X$, and $\epsilon_1$ corresponds to the evaluation map
$$ \Sinfty \Oinfty X \ra X.$$

Applying $H^*$ to the towers yields 2nd quadrant spectral sequences.  From what we have said above, one can immediately conclude the following.

When $n<\infty$, the spectral sequence converges strongly to $ H^*(\Omega^nX)$ if $X$ is an $n$--connected space, and has
$$ E_1^{-j,*} = \Sigma^j H^*(D_{n,j}\Sigma^{-n}X).$$

When $n=\infty$, the spectral sequence converges strongly to $ H^*(\Omega^{\infty}X)$ if $X$ is a 0--connected spectrum, and has
$$ E_1^{-j,*} = \Sigma^j H^*(D_{\infty,j}X).$$

For all $n$, $E_{\infty}^{*,*}$ is the graded object associated to the  filtration of $ H^*(\Omega^nX)$,
$$ \dots \supseteq F^{-3}H^*(\Omega^nX) \supseteq F^{-2}H^*(\Omega^nX) \supseteq F^{-1}H^*(\Omega^nX), $$
where $F^{-j}H^*(\Omega^nX) = Im \{\epsilon_j^*: H^*(P_j^n(X)) \ra H^*(\Omega^n X)\}$.

The spectral sequences are compatible as $n$--varies.  More precisely, the natural evaluation maps
\begin{equation} \label{evaluation} \Sigma^r \Sinfty \Omega^{n+r}X \ra \Sinfty \Omega^n X,
\end{equation}
as well as the natural equivalences (with $X_n = \Oinfty \Sigma^n X$)
\begin{equation} \Sinfty \Omega^n X_n \xra{\sim} \Sinfty \Oinfty X
\end{equation}
induce maps of towers, and then spectral sequences.

\begin{rem} When $n=1$, one recovers the classical Eilenberg--Moore spectral sequence with $ E_1^{-j,*} = H^*(X)^{\otimes j}$.  For general $n< \infty$, general Goodwillie calculus considerations imply that
the spectral sequence constructed here must necessarily agree with the dual of the spectral sequence studied by V.~Smirnov in \cite[Chapter 6]{smirnov}.
\end{rem}

\section{The mod 2 cohomology of $D_{n,*}X$} \label{cohomology of extended powers}

To use our spectral sequence, we need to have a useful description of the bigraded object
$H^*(D_{n,*}X)$.  In this section, we give this, by constructing various natural operations.  It is more traditional to describe $H_*(D_{n,*}X)$ using Dyer--Lashof operations, Browder operations, and the Pontryagin product \cite{clm, bmms}, and our operations are easily verified to be appropriately `dual' to these: see \propref{dual prop}.  Because of this, we will be brief with some verifications of properties (many of which we will {\em not} need in the proof of \thmref{main theorem}).

For simplicity, we make the blanket assumption: for all spectra $X$, $H_*(X)$ is bounded below and is of finite type.

\subsection{Structure maps}

\begin{defns} \ {\bf (a)} Let $\epsilon: \Sigma^r D_{n+r,j}X \ra D_{n,j} \Sigma^r X$ denote the map induced by the evaluation map (\ref{evaluation}). (See \cite{ak} for an explicit formula.)

{\bf (b)} \ Let $\mu: D_{n,i}X \sm D_{n,j}X \ra D_{n,i+j}X$ denote the map induced by the inclusion $\Sigma_i \times \Sigma_j \subset \Sigma_{i+j}$.

{\bf (c)} \  Let $t: D_{n,i+j}X \ra D_{n,i}X \sm D_{n,j}X$ denote the composite of the maps
$$ (\C(n,i+j)_+\sm X^{\sm{i+j}})_{h\Sigma_{i+j}} \ra (\C(n,i+j)_+\sm X^{\sm{i+j}})_{h\Sigma_{i} \times \Sigma_j}$$
and
$$ (\C(n,i+j)_+\sm X^{\sm{i+j}})_{h\Sigma_{i} \times \Sigma_j} \ra (\C(n,i)_+ \sm \C(n,j)_+\sm X^{\sm{i+j}})_{h\Sigma_{i} \times \Sigma_j},$$
where the first map is the transfer associated to $\Sigma_{i} \times \Sigma_j \subset \Sigma_{i+j}$ and the second map is induced by the $\Sigma_{i} \times \Sigma_j$--equivariant inclusion of spaces
$$ \C(n,i+j) \subset \C(n,i) \times \C(n,j).$$

{\bf (d)} \  Let $w: D_{n,2j}X \ra D_{\infty,2}D_{n,j}X$ denote the composite of the maps
$$ (\C(n,2j)_+ \sm X^{2j})_{h \Sigma_{2j}} \ra (\C(n,2j)_+ \sm X^{2j})_{h \Sigma_2 \wr \Sigma_{j}}$$
and
$$(\C(n,2j)_+ \sm X^{2j})_{h \Sigma_2 \wr \Sigma_{j}} \ra (\C(n,j)^2_+ \sm X^{2j})_{h \Sigma_2 \wr \Sigma_{j}},$$
where the first map is the transfer associated to the inclusion $\Sigma_{2} \wr \Sigma_j \subset \Sigma_{2j}$ and the second map is induced by the $\Sigma_{2} \wr \Sigma_j$--equivariant inclusion of spaces
$$ \C(n,2j) \subset \C(n,j)^2.$$
\end{defns}

\subsection{Operations}

\begin{defn}  For $r \geq 0$, define natural operations
$$\hat Q_r: H^d(D_{n,j}X) \ra H^{2d+r}(D_{n,2j}X)$$
as follows.

 Given $x \in H^d(X)$, viewed as a map $x: X \ra \Sigma^d H\Z/2$, we let $\hat Q_0(x) \in H^{2d}(D_{\infty,2}X)$ be the composite
$$ D_{\infty,2}X \xra{D_{\infty,2}x} D_{\infty,2}\Sigma^dH\Z/2 \xra{u} \Sigma^{2d}H\Z/2,$$
where $u$ represents the bottom class in $H^*(D_{\infty,2}\Sigma^dH\Z/2)$.

 Given $x \in H^d(D_{n,j}X)$, we let $\hat Q_0(x) \in H^{2d}(D_{n,2j}X)$ be the composite
$$ D_{n,2j}X \xra{w} D_{\infty,2}D_{n,j}X \xra{\hat Q_0(x)} \Sigma^{2d}H\Z/2,$$

and then, for $r >0$, we let $\hat Q_r(x) \in H^{2d+r}(D_{n,2j}X)$ be the composite
$$ D_{n,2j}X \xra{\epsilon} \Sigma^{-r}D_{n-r,2j}\Sigma^r X \xra{ \Sigma^{-r}\hat Q_0(\Sigma^rx)} \Sigma^{2d+r}H\Z/2.$$
\end{defn}

\begin{defn}  Define a natural product
$$*:H^*(D_{n,i}X) \otimes H^*(D_{n,j}X) \ra H^{*}(D_{n,i+j}X)$$
to be the map on cohomology induced by the `transfer' maps
$$t: D_{n,i+j}X \ra D_{n,i}X \sm D_{n,j}X.$$
\end{defn}

Note that, when $n=1$, the $*$--product is the standard shuffle product on the tensor algebra $TH^*(X)$.

\begin{defn}  Define a natural coproduct
$$\Psi:H^{*}(D_{n,i+j}X) \ra H^*(D_{n,i}X) \otimes H^*(D_{n,j}X)$$
to be the map on cohomology induced by the maps
$$\mu: D_{n,i}X \sm D_{n,j}X \ra D_{n,i+j}X.$$
\end{defn}
\begin{defn} For $n < \infty$ and $d_1+\dots+d_j = d$, define
$$ L_{n-1}: H^{d_1}(X) \otimes \dots \otimes H^{d_j}(X) \ra H^{d+(j-1)(n-1)}(D_{n,j}X)$$
to be the map on cohomology induced by the map
$$\epsilon: D_{n,j}X \ra \Sigma^{1-n}D_{1,j}\Sigma^{n-1}X = \Sigma^{(j-1)(n-1)}X^{\sm j}.$$
\end{defn}
Note that $L_0$ is just the usual product in the tensor algebra $TH^*(X)$.

The following will be made precise in Appendix \ref{op proofs}. See \propref{dual prop}.

\begin{prop} \label{dumb dual prop} In a suitable sense, the cohomology operations $\hat Q_r$, $*$, and $L_{n-1}$ are dual to the homology operations $Q_r$, $*$, and $\lambda_{n-1}$.
\end{prop}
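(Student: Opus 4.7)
The plan is to make the word ``dual'' precise via Kronecker pairings. The blanket finite-type hypothesis makes $H^*(D_{n,j}X) \to \Hom(H_*(D_{n,j}X), \Z/2)$ an isomorphism, and Künneth extends this pairing to smash products. For any map $f: A \to B$ of spectra, $f^*$ and $f_*$ are Kronecker-adjoint by construction. Each of $\hat Q_r$, $*$, and $L_{n-1}$ is built by pulling back along a specific natural map of spectra, so its Kronecker-dual operation on homology exists tautologically; the actual content of \propref{dual prop} is the identification of each such dual with the classical Dyer--Lashof, Pontryagin, and Browder operation respectively.

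I would handle the three cases in turn. For $\hat Q_r$ versus $Q_r$: the operation $\hat Q_0$ is pullback along the bottom class $u \in H^{2d}(D_{\infty,2}\Sigma^d H\Z/2)$; dually one pushes forward the top class of $H_{2d}(D_{\infty,2}\Sigma^d H\Z/2)$, which by the standard description of homology of extended powers of Eilenberg--Mac Lane spectra is exactly $Q_0$ applied to the fundamental class. The $r > 0$ case reduces to $r = 0$ via the naturality of $\epsilon$ and the $(\Sigma^r, \Sigma^{-r})$--adjunction. For $*$ versus Pontryagin $*_P = \mu_*$: since cohomology $*$ is $t^*$, its Kronecker-dual is $t_*$, the homology transfer. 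The standard double-coset/transfer-inclusion identity in equivariant stable homotopy identifies $t_*$ with the coproduct Kronecker-dual to $\mu_*$, so $*$ and $*_P$ pair as an algebra--coalgebra dual pair. For $L_{n-1}$ versus $\lambda_{n-1}$: $L_{n-1}$ is $\epsilon^*$, where $\epsilon: D_{n,j}X \to \Sigma^{(j-1)(n-1)}X^{\sm j}$, and its Kronecker-dual $\epsilon_*$ is the iterated Browder operation as constructed in \cite{clm}; the definition of $L_{n-1}$ is arranged to match this construction exactly.

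The principal obstacle is the $*$-product case: one must invoke the double-coset formula for the cover $\Sigma_{i+j} \to \Sigma_{i+j}/(\Sigma_i \times \Sigma_j)$ to identify $t_*$ with the Kronecker-dual of $\mu_*$. The other two cases amount to definition-chasing once the Kronecker pairing is in place. Throughout, grading conventions and $\Sigma$-suspensions must be tracked carefully, since suspension behaves oppositely on homology and cohomology, and this is the main source of sign and degree subtleties that the precise \propref{dual prop} in the appendix will need to record.
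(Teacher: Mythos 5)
Your proposal matches the paper's own treatment in Appendix~\ref{op proofs}: there the "suitable sense" is spelled out as the list of Kronecker pairings in \propref{dual prop}, and the sketch proof, like yours, traces the structure maps $\epsilon$ and $t$ through the Kronecker adjunction and cites \cite{clm} for the values of $\epsilon_*$ and $t_*$ on $Q_s$, the Pontryagin product, and $\lambda_{n-1}$. One small slip to fix: $\epsilon_*$ is not itself "the iterated Browder operation" (which maps $H_*(X)^{\otimes j}$ \emph{into} $H_*(D_{n,j}X)$); rather, as the paper records, $\epsilon_*$ is the pushforward that \emph{detects} $\lambda_{n-1}$ via $\epsilon_*(\lambda_{n-1}(y,z)) = \sigma^{n-1}(y\otimes z + z\otimes y)$, and it is that formula that yields pairing~(i) of \propref{dual prop}.
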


\subsection{Some properties of the operations}

\begin{prop} \label{Hopf alg prop} The $*$--product and $\Psi$--coproduct makes $H^*(D_{n,*}X)$ into a bigraded bicommutative Hopf algebra.
\end{prop}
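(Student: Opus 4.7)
The plan is to deduce the proposition from the corresponding (classical) statement about $H_*(D_{n,*}X)$, with a direct spectrum-level verification available as backup.

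By the blanket finite-type hypothesis on $H_*(X)$, each $H^*(D_{n,j}X)$ is degreewise finite-dimensional, so duality interchanges algebra and coalgebra structures; since bicommutativity is self-dual, the dual of a bigraded bicommutative Hopf algebra is again one. By \propref{dumb dual prop}, the operations $*$ and $\Psi$ are dual to the Pontryagin product (induced by $\mu$) and the transfer-style coproduct (induced by $t$) on $H_*(D_{n,*}X)$. That the latter pair of operations makes $H_*(D_{n,*}X)$ a bigraded bicommutative Hopf algebra is a classical computation, originally carried out in the setting of $H_*(\Omega^n\Sigma^n X)$ by Cohen--Lada--May \cite{clm} and summarized in \cite{bmms}. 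Dualizing then gives the claim.

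For a direct argument at the spectrum level, I would proceed in four steps. (i) The block inclusions $\Sigma_i \times \Sigma_j \subset \Sigma_{i+j}$, together with juxtaposition of little cubes, make $\bigvee_j D_{n,j}X$ into a homotopy-commutative, associative, unital monoid object with unit $D_{n,0}X = S^0$; this yields cocommutativity, coassociativity, and counitality of $\Psi$. (ii) A parallel argument using transitivity of the transfer and the symmetry of the little cubes operad gives commutativity, associativity, and unitality of $*$. (iii) The bialgebra compatibility $\Psi \circ * = (* \otimes *) \circ (1 \otimes \tau \otimes 1) \circ (\Psi \otimes \Psi)$ follows from the spectrum-level Mackey double coset formula applied to the two subgroups $\Sigma_p \times \Sigma_q$ and $\Sigma_i \times \Sigma_j$ of $\Sigma_{i+j}$: the double cosets are indexed by $2 \times 2$ matrices of nonnegative integers with prescribed row sums $(p,q)$ and column sums $(i,j)$, and the contribution of each such matrix is exactly the term on the right-hand side of the bialgebra axiom that decomposes $i = a_{11} + a_{21}$ and $j = a_{12} + a_{22}$. (iv) The antipode exists automatically, because $D_{n,0}X = S^0$ makes the bigraded Hopf algebra connected in the $j$-grading.

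The main obstacle in the direct route is step (iii): although the double coset formula is a standard fact for stable transfers of finite group inclusions, realizing it in our setting with the operadic structure of the little cubes (rather than on classifying spaces alone) requires checking that the relevant operad inclusions $\C(n,i+j) \subset \C(n,i) \times \C(n,j)$ and the transfers intertwine correctly. The duality approach neatly sidesteps this bookkeeping by importing it from the established homological computation.
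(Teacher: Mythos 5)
The paper does not give an explicit proof of this proposition; it announces (just before the definitions of the operations) that the cohomology operations are appropriately dual to the classical homology operations of Cohen--Lada--May and Bruner--May--McClure--Steinberger, cites \cite{clm,bmms} and \propref{dual prop}, and then says it will ``be brief with some verifications of properties.'' Your primary (duality) route is therefore essentially the paper's intended argument, and it is correct. One minor slip: the correspondences are the other way round from what you write --- $*$ is $t^*$, hence dual to the transfer coproduct $t_*$ on homology, while $\Psi$ is $\mu^*$, hence dual to the Pontryagin product $\mu_*$; since duality swaps products and coproducts the conclusion is unaffected, but the labels should be exchanged. Also note that \propref{dual prop} only makes the pairing explicit at the $j=2$ stage; what you are really invoking for the Hopf dualization is the general adjointness of $t^*,\mu^*$ to $t_*,\mu_*$ under the cohomology/homology pairing, which holds by naturality and the standing finite-type hypothesis. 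Your direct spectrum-level alternative is a plausible sketch, though, as you correctly flag, step (iii) is where the content lies: the double-coset decomposition must be made compatible with the operad inclusions $\C(n,i+j) \subset \C(n,i)\times\C(n,j)$, and this is precisely the geometric bookkeeping the paper delegates to \cite{ak}.
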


\begin{prop} \label{unstable prop} For all $x \in H^*(D_{n,j}X)$, $\hat Q_r(x) = 0$ for $r \geq n$.
\end{prop}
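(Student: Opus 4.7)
The plan is to read the vanishing off directly from the construction. For $r>0$, the operation $\hat Q_r(x)$ is by definition the composite
\[ D_{n,2j}X \xra{\epsilon} \Sigma^{-r}D_{n-r,2j}\Sigma^r X \xra{\Sigma^{-r}\hat Q_0(\Sigma^r x)} \Sigma^{2d+r}H\Z/2, \]
so my focus would be the intermediate extended power $D_{n-r,2j}\Sigma^r X$ in the range $r \geq n$. Setting $m := n-r \leq 0$, I would invoke the standard fact about the little cubes operad that $\C(m,k) = \emptyset$ whenever $m \leq 0$ and $k \geq 2$: two or more disjoint $0$-cubes cannot fit inside a single $0$-cube, and for $m<0$ the space is empty by convention. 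Since the operations are of interest only for $j \geq 1$, we have $2j \geq 2$, hence $D_{n-r,2j}\Sigma^r X \simeq *$, the target of $\epsilon$ is contractible, and the defining composite is null.

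The separate boundary case $r = 0 \geq n$ forces $n = 0$, in which case $D_{0,2j}X \simeq *$ for $j \geq 1$ by the same emptiness observation, so $\hat Q_0$ takes values in the zero group and the claim is vacuous.

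A conceptually cleaner alternative would go through \propref{dumb dual prop}: the cohomology operation $\hat Q_r$ is dual to the Dyer--Lashof operation $Q_r$ on $H_*(D_{n,*}X)$, and it is a classical theorem of F.~Cohen (cf.~\cite{clm}) that $Q_r$ vanishes for $r \geq n$ in any $E_n$-algebra, from which the cohomological vanishing follows by dualizing. I expect the only real obstacle to either argument is pure bookkeeping---checking that the edge conventions for $D_{m,k}$ at $m \leq 0$, and the behaviour of the operations at $j \in \{0,1\}$, match the definitions of \secref{cohomology of extended powers}---after which the proof is essentially a one-line consequence of the construction of $\hat Q_r$.
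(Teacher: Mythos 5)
The paper leaves this proposition unproved, treating it as immediate from the construction (the label ``unstable prop'' signals that it is the analogue of the instability axiom in $\U$). Your direct argument is the natural one and is essentially right: by construction $\hat Q_r(x)$, for $r>0$, is pulled back along the evaluation map $\epsilon\colon D_{n,2j}X \to \Sigma^{-r}D_{n-r,2j}\Sigma^r X$, and since $\C(0,k)=\emptyset$ for $k\geq 2$, one has $D_{0,k}Y\simeq *$ and thus the composite is null once we hit $D_{0,2j}$. This cleanly disposes of $r=n$ (and of $r=0\geq n$, as you note).

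One small point to tighten: for $r>n$ the assertion that ``$\C(m,k)$ is empty by convention for $m<0$'' is not a real convention, and $D_{m,k}$ is not defined there. The cleaner way to close the case $r>n$ is to invoke the compatibility of the evaluation maps (\lemref{epsilon prop}-style, or just Definition~3.2(a)): $\epsilon\colon D_{n,2j}X \to \Sigma^{-r}D_{n-r,2j}\Sigma^r X$ factors through $\epsilon\colon D_{n,2j}X\to \Sigma^{-n}D_{0,2j}\Sigma^n X$, and the latter target is already contractible for $j\geq 1$, so the defining composite for $\hat Q_r$ is null for \emph{all} $r\geq n$ without having to interpret $D_{m,2j}$ for $m<0$. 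With that adjustment your argument is exactly the intended one-liner.

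Your proposed alternative via \propref{dual prop} also works in principle, but it is heavier than it looks: one needs the full CLM computation that $H_*(D_{n,2}X)$ has a basis consisting of $Q_s(y)$ for $0\leq s\leq n-1$ together with $y*z$ and $\lambda_{n-1}(y,z)$, plus the finite-type hypothesis so that the pairing is perfect, before the vanishing of $\hat Q_r$ for $r\geq n$ follows. Since the CLM description is itself proved using essentially the same geometric input about little cubes, the direct argument is preferable, and is surely what the author had in mind.
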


\begin{prop} \label{epsilon prop} Under $\epsilon^*: H^*(D_{n,*}\Sigma X) \ra H^{*-1}(D_{n+1,*}X)$, the operations behave as follows. \\

\noindent (i) \  $\epsilon^*$ sends $*$--decomposables to 0: $\epsilon^*(x*y) = 0$ for all $x \in H^*(D_{n,i}\Sigma X)$ and $y \in H^*(D_{n,j}\Sigma X)$, with $i \geq 1$ and $j \geq 1$.  Similarly, the image of $\epsilon^*$ is contained in the $\Psi$--primitives. \\

\noindent (ii) \ $\epsilon^*$ commutes with the $\hat Q$ operations: $\epsilon^*(\hat Q_r(\sigma x)) = \hat Q_{r+1}(x)$. \\

\noindent (iii) \ $\epsilon^*$ commutes with the $L$ operations: for all $x_1, \dots, x_k \in H^*(X)$, $$\epsilon^*(L_{n-1}(\sigma x_1 \otimes \dots \otimes \sigma x_k)) = L_{n}(x_1 \otimes \dots \otimes x_k).$$
\end{prop}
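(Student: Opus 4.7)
The plan is to verify each of the three claims by unwinding the definitions of the relevant operations and exploiting the naturality and iterated compatibility of the evaluation maps $\epsilon$ inside Arone's explicit little-cubes model as developed in \cite{ak}.

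Parts (ii) and (iii) are essentially formal. For $r > 0$, the operation $\hat Q_r$ is by \emph{definition} the pullback of $\hat Q_0(\Sigma^r x)$ along the $r$-fold evaluation $\epsilon: \Sigma^r D_{n,2j}X \to D_{n-r,2j}\Sigma^r X$; precomposing with $\epsilon: \Sigma D_{n+1,2j}X \to D_{n,2j}\Sigma X$ and using that these two evaluations compose, up to canonical desuspension, to the $(r+1)$-fold evaluation $\Sigma^{r+1}D_{n+1,2j}X \to D_{n-r,2j}\Sigma^{r+1}X$, recovers the defining composite for $\hat Q_{r+1}(x)$. The argument for (iii) is identical in shape: $L_{n-1}$ is pullback along $\epsilon: D_{n,j}X \to \Sigma^{1-n}D_{1,j}\Sigma^{n-1}X$, and one further $\epsilon$ (combining suspension coordinates) increments $n-1$ to $n$. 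In both cases the only content is the compositional compatibility of iterated $\epsilon$'s, which is a naturality property of the model.

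Part (i) is the substantive claim. To show $\epsilon^*$ kills $*$--decomposables, I would establish that the composite
\[
\Sigma D_{n+1, i+j}X \xra{\epsilon} D_{n, i+j}\Sigma X \xra{t} D_{n,i}\Sigma X \sm D_{n,j}\Sigma X
\]
is null-homotopic for $i,j \geq 1$. Geometrically, this reflects the fact that the extra $(n+1)$st little-cube coordinate used to form $D_{n+1,i+j}X$, upon evaluation via $\epsilon$, becomes a suspension coordinate that cannot be split compatibly with the $\Sigma_i \times \Sigma_j$--restriction defining $t$. A cleaner route, however, is to appeal to \propref{dumb dual prop}: the claim is dual to the classical fact in $H_*(D_{n,*}X)$ that suspension classes are $*$--indecomposable (their Pontryagin-type products on suspension classes land in the image of $\lambda_n$, not $\lambda_{n-1}$), as in \cite{clm}. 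The assertion that the image of $\epsilon^*$ lies in $\Psi$--primitives is dual with the roles of $\mu$ and $t$ interchanged, and follows identically.

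The main obstacle is part (i). Parts (ii) and (iii) are essentially bookkeeping once the iterated-composition property of $\epsilon$ is in hand, and I would defer the careful model-theoretic verification of that to \appref{op proofs}. Part (i), by contrast, requires either honest operadic input to exhibit the null-homotopy or else invocation of duality with the well-understood homology operations of Cohen--Lada--May; I expect the latter route to be the cleanest, and would present (i) as a consequence of \propref{dumb dual prop} together with the classical structure of $H_*(D_{n,*}X)$.
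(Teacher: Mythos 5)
Your proposal is correct and, for parts (ii) and (iii), matches the paper exactly: the paper simply declares these clear from the definitions, which is what your iterated-$\epsilon$ bookkeeping argument unwinds. For part (i), the paper's actual proof is a one-line citation to \cite[Example~6.7]{ak}, which is the geometric route you sketch first (the null behavior of $t\circ\epsilon$ in mod~$2$ (co)homology); so your primary argument is the paper's argument.

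Your alternative duality-based route for (i) is genuinely different from what the paper records, and it would work, but you should be careful with the statement being dualized. Since the cohomology $*$--product is $t^*$, the claim $\epsilon^*(x*y)=0$ for $i,j\geq 1$ dualizes (over $\Z/2$, with the standing finite-type hypothesis) to the assertion that $\epsilon_\ast$ lands in the \emph{$t_\ast$--primitives} of homology --- not that ``suspension classes are $*$--indecomposable'' with respect to the Pontryagin product. The relevant classical input is the homology suspension formulae of \cite[Thm.~III.1.4]{clm} (which the paper quotes in the sketch proof of \propref{dual prop}), namely that $\epsilon_\ast$ carries each of $Q_s$, $\ast$, and $\lambda$ to the corresponding operation or to zero, so its image consists of primitives for the coproduct $t_\ast$. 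Correspondingly, the second half of (i) (image of $\epsilon^\ast$ in $\Psi$--primitives) dualizes to $\epsilon_\ast$ killing Pontryagin $\ast$--decomposables, which is the explicit formula $\epsilon_\ast(\sigma(y\ast z))=0$ from the same source, so the two halves are not quite ``dual with roles of $\mu$ and $t$ interchanged'' in the way you phrase it; each half dualizes against the \emph{same} homology statements. Finally, note that \propref{dual prop} as stated in the paper only treats $D_{n,2}$; if you lean on duality for (i) at general $i+j$, you should either invoke \cite{clm} directly or note that the bilinear pairings extend to all $D_{n,j}$ in the obvious way.
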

\begin{proof} Parts (ii) and (iii) are clear from the definition, and part (i) is only slightly less so.  For more detail about (i), see \cite[Example 6.7]{ak}.
\end{proof}

\begin{prop}  For all $x \in H^*(X)^{\otimes k}$, $\hat Q_{n-1}(L_{n-1}(x)) = L_{n-1}(x \otimes x)$.
\end{prop}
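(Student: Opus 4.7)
The approach is to induct on $n$, using \propref{epsilon prop} to climb from $n-1$ to $n$, and then to verify the base case $n=1$ directly from the definition of $\hat Q_0$.

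\emph{Inductive step.} Assume the identity for $n-1$. Apply it to the space $\Sigma X$ with the class $\sigma x := \sigma x_1 \otimes \cdots \otimes \sigma x_k \in H^*(\Sigma X)^{\otimes k}$, giving
\[
\hat Q_{n-2}\bigl(L_{n-2}(\sigma x)\bigr) \;=\; L_{n-2}(\sigma x \otimes \sigma x) \quad \text{in } H^*(D_{n-1,2k}\Sigma X).
\]
Apply $\epsilon^* : H^*(D_{n-1,*}\Sigma X) \to H^{*-1}(D_{n,*}X)$ to both sides. \propref{epsilon prop}(iii) rewrites the right side as $L_{n-1}(x \otimes x)$ and identifies $L_{n-2}(\sigma x)$ as a suspension preimage of $L_{n-1}(x)$; \propref{epsilon prop}(ii) then rewrites the left side as $\hat Q_{n-1}(L_{n-1}(x))$, completing the inductive step.

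\emph{Base case $n=1$.} Here the identity reduces to showing that, for $y_1, \dots, y_k \in H^*(Y)$,
\[
\hat Q_0(y_1 \otimes \cdots \otimes y_k) \;=\; y_1 \otimes \cdots \otimes y_k \otimes y_1 \otimes \cdots \otimes y_k
\]
in $H^*(Y^{\wedge 2k})$. Let $y : Y^{\wedge k} \to \Sigma^d H\Z/2$ represent $y_1 \otimes \cdots \otimes y_k = L_0(y_1 \otimes \cdots \otimes y_k)$. By definition, $\hat Q_0(y)$ is the composite
\[
Y^{\wedge 2k} \xrightarrow{w} D_{\infty,2}Y^{\wedge k} \xrightarrow{D_{\infty,2}y} D_{\infty,2}\Sigma^d H\Z/2 \xrightarrow{u} \Sigma^{2d}H\Z/2.
\]
Since $u$ corresponds to the $\Sigma_2$-equivariant multiplication $(\Sigma^d H\Z/2)^{\wedge 2} \to \Sigma^{2d} H\Z/2$, the composite $u \circ D_{\infty,2}y$, when precomposed with the canonical external map $j : Y^{\wedge k} \wedge Y^{\wedge k} \to D_{\infty,2}Y^{\wedge k}$, equals $y \wedge y$, which is precisely $L_0(y \otimes y)$. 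The base case thus reduces to identifying, at $n=1$, the map $w$ with $j$ under the equality $Y^{\wedge 2k} = Y^{\wedge k} \wedge Y^{\wedge k}$.

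\emph{Main obstacle.} I expect this last identification $w \simeq j$ at $n=1$ to be the most delicate step: it asks that the transfer associated to $\Sigma_2 \wr \Sigma_k \subset \Sigma_{2k}$, followed by the map induced by $\C(1,2k) \to \C(1,k)^2$, collapses to the external map $j$ after exploiting the fact that each component of $\C(1,\cdot)$ is contractible. Although elementary, this is the kind of bookkeeping most cleanly handled by invoking the duality with the Dyer--Lashof and Pontryagin operations (\propref{dumb dual prop}), where the analogous statement on homology is standard.
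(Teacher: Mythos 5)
Your proof takes essentially the same route as the paper's: both reduce to $n=1$ by iterating $\epsilon^*$-compatibility of $\hat Q$ and $L$ from \propref{epsilon prop}(ii),(iii), and both then rest on the identity $\hat Q_0(y) = y\otimes y$ for $y\in H^*(X^{\wedge k})\cong H^*(D_{1,k}X)$. The paper dismisses this base case as ``clear from the definition,'' while you correctly flag that one must still check that $w$ induces the same map as the external inclusion $j$ in mod~2 cohomology (as stable maps they differ by an integer multiple coming from the transfer, so the literal identification of maps should be replaced by equality of induced maps mod~2); your proposed duality verification does close this cleanly, since \propref{dual prop}(b) determines $\langle \hat Q_0 y, \cdot\rangle$ on $*$-products, and at $n=1$ the classes $y*z = y\otimes z$ span all of $H_*(X^{\wedge 2k})$.
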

\begin{proof} By parts (ii) and (iii) of the last proposition, this reduces to the case when $n=1$, where it reads \  $\hat Q_0(x) = x\otimes x$, for $x \in H^*(X^{\sm k})$, and this is clear from the definition of $\hat Q_0$.
\end{proof}

\begin{prop} \label{Q additive prop} For all $x,y \in H^d(D_{n,j}X)$, the following identities hold.\\

\noindent{(i)} \ $\hat Q_0(x+y) = \hat Q_0(x) + \hat Q_0(y) + x*y$. \\

\noindent{(ii)} \ $\hat Q_r(x+y) = \hat Q_r(x) + \hat Q_r(y)$, for all $r>0$. \\

\noindent{(iii)} \ $x*x=0$.
\end{prop}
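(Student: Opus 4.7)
The plan is to prove the three identities in the order (iii), (i), (ii): part (iii) is a transfer-theoretic cancellation, part (i) identifies $\hat{Q}_0$ as a quadratic refinement of a ``smash square'' class, and part (ii) reduces to (i) together with \propref{epsilon prop}(i).

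For (iii), let $\pi_\infty : D_{n,j}X \sm D_{n,j}X \to D_{\infty,2}D_{n,j}X$ be the $\Sigma_2$-orbit projection. A direct unwinding of the definition of $\hat{Q}_0$ yields $\pi_\infty^* \hat{Q}_0(x) = x \otimes x$, so
$$x * x = t^*(x \otimes x) = (\pi_\infty \circ t)^* \hat{Q}_0(x).$$
Using the factorization $\Sigma_j \times \Sigma_j \lhd \Sigma_2 \wr \Sigma_j \subset \Sigma_{2j}$, the map $\pi_\infty \circ t$ factors through the composite ``transfer for $\Sigma_j \times \Sigma_j \subset \Sigma_2 \wr \Sigma_j$ followed by projection back onto the $\Sigma_2 \wr \Sigma_j$-orbits''; being transfer-then-cover-projection for a free double cover, this composite is multiplication by $2$ on cohomology, hence zero mod $2$. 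Therefore $x*x = 0$.

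For (i), I would employ the stable splitting $D_{\infty,2}(A \vee A) \simeq D_{\infty,2}A \vee (A \sm A) \vee D_{\infty,2}A$ of the quadratic functor $D_{\infty,2}$. Writing $x+y : D_{n,j}X \to Y := \Sigma^d H\Z/2$ as $\nabla \circ (x \vee y) \circ \rho$ with $\rho$ the stable pinch and $\nabla$ the fold, one verifies that $D_{\infty,2}(\rho)$ has components $(\text{id}, t_\infty, \text{id})$ into the three summands, where $t_\infty : D_{\infty,2}A \to A \sm A$ is the stable transfer, and similarly that $D_{\infty,2}(\nabla)$ has components $(\text{id}, \pi_\infty, \text{id})$. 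Composing and post-composing with the bottom class $u$, the middle summand's contribution is the composite $D_{\infty,2}A \xra{t_\infty} A \sm A \xra{x \sm y} Y \sm Y \xra{u\circ \pi_\infty} \Sigma^{2d}H\Z/2$; since $u \circ \pi_\infty$ is the cup-product pairing on $H\Z/2$, this equals $t_\infty^*(x \otimes y)$. Pulling back along $w$, and using $t_\infty \circ w = t$ (a compatibility of transfers and configuration inclusions proved by the same subgroup factorization as in (iii)), yields $\hat{Q}_0(x+y) = \hat{Q}_0(x) + \hat{Q}_0(y) + x*y$ in $H^*(D_{n,2j}X)$.

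For (ii) with $r > 0$, applying $\epsilon^*$ to the identity from (i) at level $(n-r, j)$ gives
$$\hat{Q}_r(x+y) = \epsilon^*\bigl(\hat{Q}_0(\Sigma^r x) + \hat{Q}_0(\Sigma^r y) + (\Sigma^r x)*(\Sigma^r y)\bigr) = \hat{Q}_r(x) + \hat{Q}_r(y),$$
since the cross term is a $*$-decomposable and hence annihilated by $\epsilon^*$ via \propref{epsilon prop}(i). The main obstacle is part (i): pinning down the cross term precisely as $x*y$ rather than some other $\Sigma_2$-symmetric pairing requires careful bookkeeping through the stable splitting of $D_{\infty,2}$ on a wedge together with the compatibility $t_\infty \circ w = t$, while (ii) and (iii) become relatively clean once (i) is in hand.
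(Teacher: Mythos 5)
Your proof is correct and, for parts (i) and (ii), follows essentially the paper's own argument: the paper also reduces (ii) to (i) via \propref{epsilon prop}(i), and for (i) it invokes \cite[Cor.~II.1.6]{bmms} for the decomposition of $D_{\infty,2}(x+y)$ into $D_{\infty,2}(x)$, $D_{\infty,2}(y)$, and the cross term $\mu\circ(x\sm y)\circ t$ --- the same statement you re-derive from the wedge splitting of $D_{\infty,2}(A\vee A)$ --- and then identifies that cross term, after precomposition with $w$, as $x*y$ using the same transitivity $t_\infty\circ w = t$ for $\Sigma_j\times\Sigma_j\subset\Sigma_2\wr\Sigma_j\subset\Sigma_{2j}$ that you use. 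Where you genuinely depart is part (iii): the paper obtains it in one line by setting $x=y$ in (i) (since $\hat Q_0(0)=0$ and $\hat Q_0(x)+\hat Q_0(x)=0$ over $\Z/2$), whereas you give an independent argument exhibiting $x*x$ as the pullback of $\hat Q_0(x)$ along $\pi_\infty\circ t = \pi_\infty\circ t_\infty\circ w$, and noting that $\pi_\infty\circ t_\infty$ (covering projection following transfer for a free double cover) acts as multiplication by $2=0$ on mod-$2$ cohomology. Your argument for (iii) is correct and has its own appeal --- it makes the vanishing transparent without first establishing the full quadratic-refinement formula --- but it is logically redundant once (i) is in hand, and the paper's deduction is more economical.
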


See Appendix \ref{op proofs} for a proof.

\begin{prop} \label{L relations prop} For all $n \geq 2$, the kernel of $L_{n-1}: TH^*(X) \ra H^*(D_{n,*}X)$ is the span of the shuffle product decomposables.
\end{prop}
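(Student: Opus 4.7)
The plan is to establish the two inclusions separately. For the inclusion $(\text{shuffle decomposables}) \subseteq \ker L_{n-1}$, I would iterate \propref{epsilon prop}(iii) to express $L_{n-1}(x_1 \otimes \cdots \otimes x_k)$ as an $(n-1)$-fold composite of $\epsilon^{*}$'s beginning in $H^*(D_{1,k}\Sigma^{n-1}X) = T^k H^*(\Sigma^{n-1}X)$. Since the $*$-product on $H^*(D_{1,*}Y) = TH^*(Y)$ is literally the shuffle product, a shuffle-decomposable element of $TH^*(X)$ suspends to a $*$-decomposable element in $TH^*(\Sigma^{n-1}X)$, and \propref{epsilon prop}(i) tells us that the very first $\epsilon^*$ in the composite already kills it.

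For the reverse inclusion $\ker L_{n-1} \subseteq (\text{shuffle decomposables})$, my plan is to dualize, using \propref{dumb dual prop}, so that the dual map $L_{n-1}^{*}\colon H_*(D_{n,k}X) \ra H_*(X)^{\otimes k}$ (up to the appropriate degree shift) is recognized as an iterated Browder-bracket map. The Cohen--Lada--May description of $H_*(D_{n,k}X)$ for $n \geq 2$ exhibits a subspace, generated by iterated $\lambda_{n-1}$-brackets on basis elements of $H_*(X)$ together with the diagonal contributions $\lambda_{n-1}(y,y) = Q_{n-1}(y)$ (dual to the relation $\hat Q_{n-1}(L_{n-1}(x)) = L_{n-1}(x \otimes x)$ of the preceding proposition), on which $L_{n-1}^{*}$ is an isomorphism onto the restricted free Lie subalgebra $L^{\mathrm{res}}_{k}(H_*(X))$ sitting inside $H_*(X)^{\otimes k}$ in the standard way. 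The restricted-Lie analogue of Ree's theorem, valid over $\Z/2$, identifies $L^{\mathrm{res}}_{k}$ as the exact annihilator of the shuffle-decomposable subspace of the dual tensor power, so any $\xi \in \ker L_{n-1}$ must itself be shuffle-decomposable.

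The main obstacle is the precise identification of the image of $L_{n-1}^{*}$ with $L^{\mathrm{res}}_{k}(H_*(X))$. In characteristic $2$ it is essential to include the Dyer--Lashof contribution $Q_{n-1}$ alongside the Browder bracket $\lambda_{n-1}$: without $Q_{n-1}$ one would see only the ordinary free Lie subalgebra, which in characteristic $2$ misses Frobenius-type classes (most strikingly visible in single-generator examples, where the surviving weights are exactly the powers of $2$) and is therefore strictly smaller than the restricted version required for the duality to be sharp. I would carry out this identification by unwinding \propref{dumb dual prop} against the Cohen--Lada--May admissible-monomial basis, along the lines foreshadowed in \appref{op proofs}.
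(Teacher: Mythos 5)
Your proposal takes essentially the same route as the paper: the easy inclusion (shuffle decomposables lie in the kernel) follows from \propref{epsilon prop}(i), and the reverse inclusion is obtained by dualizing to the statement that the image of $\epsilon_* : H_*(D_{n,*}X) \to TH_*(X)$ is the space of shuffle-coproduct primitives, identified as the free restricted Lie algebra — which the paper simply cites as well known, while you sketch how one would verify it against the Cohen--Lada--May basis and \propref{dumb dual prop}.
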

\begin{proof} This is dual to the well known statement that the image of $\epsilon_*: H_*(D_{n,*}X) \ra TH_*(X)$ is the vector space of primitives, which identifies as the free restricted Lie algebra generated by $H_*(X)$.  Note that \propref{epsilon prop}(i) implies that the kernel is at least as big as claimed.
\end{proof}

One has Adem relations among the $\hat Q_r$.

\begin{prop} \label{Q adem relations prop}
$$ \hat Q_r \hat Q_s(x) = \sum_j \binom{j-r}{2j-r-s} \hat Q_{r+2s-2j} \hat Q_{j}(x).$$
\end{prop}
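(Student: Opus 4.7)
The plan is to derive these relations by dualizing the classical mod $2$ Kudo--Araki--Dyer--Lashof Adem relations for the homological operations $Q_r$ on $H_*$ of extended powers, using the duality recorded in \propref{dumb dual prop}. Under the blanket finite-type hypothesis, the cohomology of each $D_{n,j}X$ is the $\F_2$-linear dual of its homology, so a relation among the $Q_r$ in $H_*$ transposes to the same-shaped relation among the $\hat Q_r$ in $H^*$.

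First I would reduce to a universal example. By naturality in $X$, the composition $\hat Q_r \hat Q_s$ is determined by its value on a universal class, which I would take to be the fundamental class in $H^d(\Sigma^d H\Z/2)$. Moreover, by iterating the suspension compatibility $\epsilon^*\hat Q_r(\sigma x) = \hat Q_{r+1}(x)$ of \propref{epsilon prop}(ii), any Adem relation for $\hat Q_r, \hat Q_s$ at finite $n$ pulls back via $\epsilon^*$ from the corresponding relation on sufficiently suspended classes at $n = \infty$; in that limit, all operations are defined and \propref{unstable prop} imposes no vanishing. Thus it suffices to check the relation for the universal class in the $n = \infty$ setting, inside $H^*(D_{\infty, 4}\Sigma^d H\Z/2)$.

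Second, I would invoke \propref{dumb dual prop} to identify $\hat Q_r$ as $\F_2$-dual to the classical $Q_r$. The homology $H_*(D_{\infty, 4}\Sigma^d H\Z/2)$ and more generally $H_*(D_{\infty, 2^k}X)$ is the classical Cohen--Lada--May free object on admissible Dyer--Lashof monomials, and the mod $2$ Adem relations among the $Q_r$ acting on it are classical. The transfer maps used to construct the $\hat Q_r$ in \S\ref{cohomology of extended powers} dualize precisely to the diagonal maps used in the classical construction of $Q_r$ in homology, so the dual pairing transports the homological Adem relations to cohomological Adem relations on the $\hat Q_r$, applied to the universal class.

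The main obstacle will be the bookkeeping needed to match the binomial coefficient $\binom{j-r}{2j-r-s}$ of the statement with the classical coefficient $\binom{t-b-1}{a-2t}$ appearing in the standard Adem relation $Q^aQ^b = \sum_t \binom{t-b-1}{a-2t}Q^{a+b-t}Q^t$. The paper's lower index $\hat Q_r$ measures excess above squaring rather than total degree shift, so one must substitute $Q^a = \hat Q_{a - 2 \deg}$ on the intermediate class $\hat Q_s(x)$ of degree $2d + s$ and track degrees through the composition; the arithmetic of this substitution is what converts $\binom{t-b-1}{a-2t}$ into $\binom{j-r}{2j-r-s}$, after reindexing the summation variable from $t$ to $j = t$. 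Modulo this careful but routine conversion, and the verification that the duality of \propref{dumb dual prop} genuinely commutes with the composition of operations on both sides, the result follows directly from the classical theory.
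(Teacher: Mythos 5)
Your approach is exactly the one the paper takes: immediately after the statement the paper writes "This follows from the homology Adem relations, using \propref{dual prop}," i.e.\ one dualizes the classical Kudo--Araki--Dyer--Lashof Adem relations through the pairing recorded in \propref{dual prop}. Your additional scaffolding (reduction to a universal class, passage to $n=\infty$, and the reindexing from upper-indexed $Q^a$ to lower-indexed $\hat Q_r$) is a reasonable expansion of the same one-line argument, not a different route.
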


This follows from the homology Adem relations, using \propref{dual prop}.  Similarly, the calculation of $H_*(D_{n,*}X)$ as in \cite{clm, bmms} implies the next theorem.

\begin{thm} Using the $*$--product, $H^*(D_{n,*}X)$ is the graded commutative algebra generated by the elements of the form $\hat Q_{r_1}\dots \hat Q_{r_l}L_{n-1}(x_1 \otimes \dots \otimes x_k)$, subject to the relations listed in \propref{unstable prop}, \propref{Q additive prop}, \propref{L relations prop}, and \propref{Q adem relations prop}.
\end{thm}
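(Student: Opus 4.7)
The plan is to dualize the classical calculation of $H_*(D_{n,*}X)$ from \cite{clm, bmms}, using the pairing \propref{dumb dual prop} between our cohomology operations and the standard homology operations. The CLM/BMMS theorem presents $H_*(D_{n,*}X)$ as a polynomial algebra under the Pontryagin product on the admissible iterated Dyer--Lashof classes $Q_{r_1}\cdots Q_{r_l}\lambda_{n-1}(x_1,\dots,x_k)$, modulo Adem relations among the $Q$'s, the instability $Q_r=0$ for $r\geq n$, the restriction axiom $Q_{|x|/2}x=x\cdot x$, and the anti-symmetry/Jacobi identities for $\lambda_{n-1}$.

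Under the blanket finite-type hypothesis, we pass to the $\Z/2$-linear (Hopf-algebra) dual. The $*$-product on $H^*$ is dual to the transfer coproduct $t_*$ on $H_*$, while $\Psi$ is dual to Pontryagin. In characteristic 2, the dual of a polynomial Hopf algebra on primitive generators is a divided power algebra; as an algebra, $\Gamma[\alpha]$ splits as an exterior algebra on the $2^k$-th divided powers $\gamma_{2^k}(\alpha)$, since $\gamma_{2^k}^2=\binom{2^{k+1}}{2^k}\gamma_{2^{k+1}}=0 \pmod 2$. On the cohomology side, $\gamma_{2^k}$ of a generator corresponds to the $k$-fold iterate of $\hat Q_0$ applied to that generator, so the full set of exterior generators is exactly the list $\hat Q_{r_1}\cdots \hat Q_{r_l}L_{n-1}(x_1\otimes\cdots\otimes x_k)$ (with the $r_i$'s allowed to be zero, encoding divided powers).

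The remaining relations transfer term by term via \propref{dumb dual prop}: \propref{unstable prop} is dual to $Q_r=0$ for $r\geq n$; \propref{Q adem relations prop} is dual to the homological Adem relations; \propref{L relations prop} is dual to the free-restricted-Lie-algebra characterization of the $t_*$-primitives in $TH_*(X)$; and \propref{Q additive prop} is the cohomological shadow of the Cartan formula for $Q_0$ together with the square-zero character of the divided power structure in characteristic 2. Combining these, the theorem is the Hopf-algebraic dual of the CLM/BMMS presentation.

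The main obstacle is the verification of \propref{dumb dual prop} itself, deferred to \appref{op proofs}. Its proof amounts to checking, for each of the three kinds of operation, that the geometric construction of the cohomology operation is genuinely $\Z/2$-linear dual to the corresponding homology operation, which requires careful tracking of the transfer maps, the little-cube diagonals, and the bigraded naturality. Once that pairing is in hand, matching generators and relations is formal Hopf-algebra bookkeeping, and the presentation in the theorem follows.
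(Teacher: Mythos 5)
Your proposal is correct and follows essentially the same route as the paper, which simply asserts that the theorem follows by dualizing the CLM/BMMS computation of $H_*(D_{n,*}X)$ via \propref{dual prop}. The paper gives no further detail; your elaboration (polynomial Hopf algebra dualizing to a divided power algebra, exterior on iterated $\hat Q_0$ classes at $p=2$, relations transferring term by term through the pairing) is exactly the bookkeeping the paper leaves implicit.
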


Finally, we have Nishida relations. Compare with {\cite[p.40]{milgram}}, {\cite[Thm. II.3.5]{bmms}}, and {\cite[Prop. 6.12]{ksw}}.

\begin{prop}  \label{Nishida prop} For all $x \in H^d(D_{n,j}X)$, the following identities hold.\\

\noindent{(i)} \ $\displaystyle Sq^s\hat Q_0(x) = \sum_t \binom{d-t}{s-2t} \hat Q_{s-2t}(Sq^tx) +  \sum_{t<s/2} Sq^{t}x*Sq^{s-t}x$. \\

\noindent{(ii)} \ $\displaystyle Sq^s\hat Q_r(x) = \sum_t \binom{d+r-t}{s-2t}\hat Q_{r+s-2t}(Sq^tx)$.
\end{prop}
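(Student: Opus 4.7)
My plan is to deduce both identities by dualizing the classical Nishida relations in $H_*(D_{n,j}X)$ through \propref{dumb dual prop}. Under that duality the cohomology operations $\hat Q_r$ and the $*$--product correspond to the Dyer--Lashof operations $Q_r$ and the little--$n$--cubes Pontryagin product, and each homology identity transcribes operation--by--operation to a cohomology identity. The input statements are recorded in \cite[Thm. II.3.5]{bmms} and \cite[p.40]{milgram}, and the analogous dualization argument is exactly what is already used to deduce the cohomology Adem relations (\propref{Q adem relations prop}).

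I would prove (i) first. Two pieces of homology data feed in: the linear Nishida formula expressing $Sq^s_* Q_r$ in terms of $Q_{?}Sq^{?}_*$, together with the Cartan formula for $Sq^s_*$ on the Pontryagin product, combined with the squaring identity $Q_0(a+b) = Q_0(a)+Q_0(b) + a\cdot b$ (the homology dual of \propref{Q additive prop}(i)). Dualizing, the linear part yields $\sum_t\binom{d-t}{s-2t}\hat Q_{s-2t}(Sq^tx)$, while the Cartan/squaring contribution yields the correction $\sum_{t<s/2}Sq^tx * Sq^{s-t}x$. The range $t<s/2$ reflects the $\Z/2$--symmetry swapping the two $*$--factors, and the $t=s/2$ case (when $s$ is even) is independently killed by \propref{Q additive prop}(iii).

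For (ii) I would bootstrap from (i) rather than redo any dualization. By the defining formula for $\hat Q_r$, and by iterating \propref{epsilon prop}(ii), one has $\hat Q_r(x) = (\epsilon^*)^r \hat Q_0(\sigma^rx)$. Applying (i) to $\sigma^rx$, which lives in degree $d+r$, the main linear sum becomes $\sum_t\binom{d+r-t}{s-2t}\hat Q_{s-2t}(Sq^t\sigma^rx)$, while the correction term is a $*$--decomposable on $H^*(D_{n-r,2j}\Sigma^rX)$. Now iterate $\epsilon^*$: it commutes with $Sq^s$ by naturality, it converts each $\hat Q_{s-2t}(\sigma^r Sq^tx)$ into $\hat Q_{r+s-2t}(Sq^tx)$ via \propref{epsilon prop}(ii), and crucially it annihilates the decomposable correction by \propref{epsilon prop}(i). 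This yields (ii) with no correction term, consistent with the additivity of $\hat Q_r$ for $r>0$ recorded in \propref{Q additive prop}(ii).

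The main obstacle I expect is coefficient bookkeeping in the dualization of step (i): confirming that the binomial comes out to $\binom{d-t}{s-2t}$ (with $d$ being the degree of $x$, not a shifted variant), and that the Cartan/squaring contribution on the homology side assembles into a sum indexed genuinely by $t<s/2$ rather than double--counting under the swap symmetry. Once the arithmetic in (i) is pinned down, the $\epsilon^*$--transfer that delivers (ii) is essentially formal.
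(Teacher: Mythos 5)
Your reduction of (ii) to (i) via $(\epsilon^*)^r$ — using \propref{epsilon prop}(i) to kill the $*$--decomposable correction and \propref{epsilon prop}(ii) to shift the $\hat Q$ indices — is exactly the paper's own reduction. For (i), your plan to dualize the classical homology Nishida relations through \propref{dual prop} is the route the paper names as ``one way'' to proceed but does not spell out; the paper's written-out proof of (i) instead first reduces to $n=\infty$, $j=1$ (a reduction you leave implicit, though it is built into the definition of $\hat Q_r$ via $w$) and then verifies the identity on a generating family of spectra: suspension spectra via detection by $X\sm X\to D_{\infty,2}X$ and $B\Z/2_+\sm X\to D_{\infty,2}X$, spheres $S^{-c}$ by direct computation inside $\Z/2[t,t^{-1}]$, then closing up under smash products and filtered colimits. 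Both routes are legitimate; the paper's spells out the combinatorics concretely, while yours outsources them to \cite{bmms} and \cite{milgram}. One small quibble: in the dualization of (i), the identity $Q_0(a+b)=Q_0(a)+Q_0(b)+a\cdot b$ is not actually a needed input. Pairing $Sq^s\hat Q_0x$ against $Q_ry$ and against $y*z$ via \propref{dual prop}(a), (b), (d), (e) together with the Cartan formula on the homology side already accounts for both terms; the $\sum_{t<s/2}$ correction arises from symmetrizing $\sum_{a+b=s}\langle Sq^ax,y\rangle\langle Sq^bx,z\rangle$ under the swap and absorbing the diagonal $t=s/2$ (when $s$ is even) into the $\hat Q_0$ term of the linear sum, exactly as you anticipate in your last paragraph.
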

See Appendix \ref{op proofs} for more about this.

\section{Some properties of the spectral sequence for $H^*(\Omega^n X)$} \label{d1 section}

Here we collect some basic properties of the spectral sequences arising from the towers of \secref{tower section}. From \cite{ak}, we learn the following.
\begin{prop} The spectral sequence is a spectral sequence of differential graded Hopf algebras, with the product and coproduct structure on $E_1$ given by the $*$ and $\Psi$, converging to the usual Hopf algebra structure on $H^*(\Omega^nX)$.
\end{prop}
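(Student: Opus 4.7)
The plan is to lift the Hopf algebra structure on $H^*(\Omega^n X)$ to the level of the tower itself, and then apply standard spectral sequence machinery for filtered objects. The proposition asserts essentially that this lift exists and has the right form on fibers.

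First I would exhibit two compatible pairings at the tower level. From the loop multiplication $m: \Omega^n X \times \Omega^n X \to \Omega^n X$, together with the stable projection $\Sinfty(Y \times Y) \to \Sinfty Y \wedge \Sinfty Y$, one constructs maps of towers
\[ m_{i,j}: P_i^n(X) \wedge P_j^n(X) \to P_{i+j}^n(X), \]
using the fact that Arone's explicit model is multiplicative: the block inclusion $\Sigma_i \times \Sigma_j \subset \Sigma_{i+j}$ acts compatibly on $\C(n,-)$. Dually, the diagonal of $\Omega^n X$, together with the same block inclusions and their associated stable transfers, produces maps of towers
\[ t_{i,j}: P_{i+j}^n(X) \to P_i^n(X) \wedge P_j^n(X). \]
Both constructions are filtered and natural in $X$, and their associated graded pieces $D_{n,i}\Sigma^{-n}X \wedge D_{n,j}\Sigma^{-n}X \to D_{n,i+j}\Sigma^{-n}X$ and $D_{n,i+j}\Sigma^{-n}X \to D_{n,i}\Sigma^{-n}X \wedge D_{n,j}\Sigma^{-n}X$ are, by construction, the maps $\mu$ and $t$ of the previous section.

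Applying $H^*$ reverses arrows, so on
\[ E_1^{-j,*} = \Sigma^j H^*(D_{n,j}\Sigma^{-n}X) \]
the pulled-back pairings $t_{i,j}^*$ and $m_{i,j}^*$ are exactly the $*$-product and the $\Psi$-coproduct. Because $m_{i,j}$ and $t_{i,j}$ are defined at the tower level, standard spectral sequence formalism then propagates these operations through every page: $d_r$ is a derivation with respect to $*$ and a coderivation with respect to $\Psi$, and compatibility (i.e.\ the bialgebra axiom) is inherited from the corresponding compatibility of $m_{i,j}$ and $t_{i,j}$ on the tower. Since the tower converges strongly, $E_\infty$ is the bigraded object associated to the filtration $F^{-j}H^*(\Omega^n X) = \mathrm{Im}(\epsilon_j^*)$, and because $m_{i,j}$ assembles in the limit to the loop product and $t_{i,j}$ to the diagonal, the $E_\infty$-structure agrees with the associated graded of the usual Hopf algebra structure on $H^*(\Omega^n X)$.

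The main obstacle is the construction of the tower-level comultiplication $t_{i,j}$: unlike the loop product, the diagonal of a space does not interact with the Goodwillie tower in an evident way, and one must produce genuine transfer maps at the level of the filtered spectrum using an explicit combinatorial model. This is the technical content imported from \cite{ak}; once it is in place, the remaining verifications (compatibility of the two pairings with each other, with the filtration, and with the abutment) are formal consequences of functoriality of $H^*$ and of the standard machinery of spectral sequences of bialgebras.
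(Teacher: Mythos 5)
The paper itself gives no proof here: it simply records the proposition as a result quoted from \cite{ak}. Your outline is a reasonable reconstruction of what that reference establishes, and the attribution of roles is correct: $t_{i,j}\colon P^n_{i+j}(X) \to P^n_i(X) \sm P^n_j(X)$, lifting the diagonal, induces the $*$--product converging to cup product, while $m_{i,j}\colon P^n_i(X) \sm P^n_j(X) \to P^n_{i+j}(X)$, lifting the loop sum, induces $\Psi$ converging to the loop coproduct, and their restrictions to fibers are the maps $t$ and $\mu$ of \S3. Two small caveats. To pass from $m\colon \Omega^nX \times \Omega^nX \to \Omega^nX$ to a pairing of spectra you precompose $\Sinfty m$ with the \emph{inclusion} of the wedge summand $\Sinfty\Omega^nX \sm \Sinfty\Omega^nX \hra \Sinfty(\Omega^nX \times \Omega^nX)$ supplied by the stable splitting, not with a projection going the other way. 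And while the derivation/coderivation properties of the $d_r$ do follow formally once the tower maps exist and are filtered, the Hopf compatibility of $m_{i,j}$ with $t_{i,j}$, together with the identification of the induced maps on fibers with $\mu$ and the transfer $t$, are genuine verifications carried out in \cite{ak}, not mere consequences of functoriality; your closing sentence somewhat understates this.
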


From the geometric construction of the spectral sequence, we deduce the next proposition.
\begin{prop} The spectral sequence is a spectral sequence of $\A$--modules, with $\A$ acting columnwise on $E_1$ in the evident way, and converging to the usual $\A$--module structure on $H^*(\Omega^n X)$. In particular, $F^{-j}H^*(\Omega^n)$ is a sub $\A$--module of  $H^*(\Omega^n X)$ for all $j$.
\end{prop}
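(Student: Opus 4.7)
The plan is to derive everything from the simple observation that the Goodwillie tower of \secref{tower section} is a tower of \emph{spectra}, with \emph{maps of spectra} both as connecting maps $P_j^n(X) \to P_{j-1}^n(X)$ and as the maps $\epsilon_j: \Sinfty \Omega^n X \to P_j^n(X)$. Since mod $2$ cohomology of a spectrum is naturally an $\A$--module, and maps of spectra induce $\A$--module maps on cohomology, applying $H^*(-)$ to the tower yields a tower of $\A$--modules with $\A$--linear structure maps.

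From here the argument is formal. First I would recall that the spectral sequence is assembled from the exact couple whose terms are $H^*(P_j^n(X))$ and $H^*(\text{fiber})$, all connected by $\A$--module maps (including the boundary maps coming from the cofiber sequence $D_{n,j}\Sigma^{-n}X \to P_j^n(X) \to P_{j-1}^n(X)$, whose connecting map is a map of spectra and hence induces an $\A$--map on $H^*$). A standard check shows that a spectral sequence built from an exact couple in $\A$--modules inherits the $\A$--module structure: each $E_r$ is a subquotient of $\A$--modules by $\A$--submodules, and each $d_r$ is induced from $\A$--linear maps, hence is itself $\A$--linear. Next I would identify the $\A$--action on $E_1$ with the claimed columnwise one: the identification $E_1^{-j,*} = \Sigma^j H^*(D_{n,j}\Sigma^{-n}X)$ arises from the natural equivalence of the layer with $D_{n,j}\Sigma^{-n}X$ shifted appropriately, and the suspension isomorphism is $\A$--linear, so the $\A$--action on $E_1^{-j,*}$ is precisely the natural action on $H^*(D_{n,j}\Sigma^{-n}X)$ transported through the shift.

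For the last sentence, by definition $F^{-j}H^*(\Omega^n X) = \mathrm{Im}\{\epsilon_j^*\}$; since $\epsilon_j^*$ is an $\A$--module map, its image is automatically a sub $\A$--module, so the filtration is by sub $\A$--modules. Convergence as $\A$--modules then follows because the strong convergence (already established) is a convergence via these $\A$--stable subspaces, and $E_\infty^{-j,*}$ is the associated graded of an $\A$--stable filtration, hence inherits the expected $\A$--module structure.

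The main obstacle is really only bookkeeping, not substance: one must verify that the identification of the layers $D_{n,j}\Sigma^{-n}X$ used in \secref{tower section} comes via maps of spectra (so that the resulting isomorphism $E_1^{-j,*} \cong \Sigma^j H^*(D_{n,j}\Sigma^{-n}X)$ is $\A$--linear), and that the convergence statement of the previous section commutes with the $\A$--action. Both are immediate from Arone's explicit model and the general principle that all the structure is built from honest maps of spectra, so nothing delicate needs to be invoked beyond the naturality of the Steenrod algebra action and the suspension isomorphism.
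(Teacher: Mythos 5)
Your argument is exactly what the paper intends by its one-line justification ``From the geometric construction of the spectral sequence, we deduce the next proposition'': the tower and its layers are built from honest maps of spectra, so applying $H^*$ gives an exact couple in $\A$--modules and everything downstream is automatic. You have just spelled out the details the paper leaves implicit, so this is the same approach, correctly executed.
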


Finally we determine the differential $d_1$ from the -2 line to the -1 line.  In other words, we determine the homomorphism
$$ d_1: \Sigma^2H^*(D_{n,2}\Sigma^{-n}X) \ra \Sigma H^*(\Sigma^{-n}X),$$
induced by the connecting map $\delta$ in the cofibration sequence
$$ D_{n,2}\Sigma^{-n}X \ra P_2^n(X) \ra \Sigma^{-n}X \xra{\delta} \Sigma D_{n,2}\Sigma^{-n}X.$$

\begin{prop} \label{d1 prop} For $x, y \in H^*(X)$ we have the following formulae. \\

\noindent{(i)} \ $d_1(\sigma^2L_{n-1}(\sigma^{-n}x \otimes \sigma^{-n}y)) = \sigma^{1-n}(x \cup y)$. \\

\noindent{(ii)} \ $d_1(\sigma^2\hat Q_r(\sigma^{-n} x)) = \sigma^{1-n} Sq^{r+ |x|-n + 1}(x)$. \\

\noindent{(iii)} \ $d_1(x*y) = 0$.
\end{prop}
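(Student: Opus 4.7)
The plan is to handle the three parts separately, in order of increasing difficulty. Part (iii) is essentially formal: by the multiplicativity of the spectral sequence established in the previous proposition (with $E_1$-product given by $*$), $d_1$ is a $*$-derivation. On the $-1$ column, $d_1$ would land in $E_1^{0,*}=\tilde H^*(S^0)$, which vanishes in positive degrees, so $d_1$ annihilates the $-1$ line; the Leibniz rule then forces $d_1(x*y)=0$.

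For part (i), the key observation is that by the very definition of $L_{n-1}$ we have $L_{n-1}(\sigma^{-n}x\otimes\sigma^{-n}y)=\epsilon^*(\sigma^{-n-1}(x\otimes y))$, where $\epsilon\colon D_{n,2}\Sigma^{-n}X\to\Sigma^{-n-1}X^{\sm 2}$ is the evaluation map. The computation of $d_1$ on this class then reduces to identifying the composite
$$ \Sigma^{-n}X \xra{\delta} \Sigma D_{n,2}\Sigma^{-n}X \xra{\Sigma\epsilon} \Sigma^{-n}X^{\sm 2}. $$
I would show that this composite is the $(-n)$-fold suspension of the reduced diagonal $\Delta\colon X\to X\sm X$. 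This amounts to the statement that the $k$-invariant of the second Goodwillie stage is the diagonal, a geometric fact built into Arone's explicit model and analyzed in \cite{ak}. Since $\Delta^*$ is the cup product, part (i) follows.

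For part (ii), the plan is to reduce the case of general $r$ to the case $r=0$ by exploiting naturality under the evaluation maps (\ref{evaluation}). These evaluation maps induce precisely the $\epsilon$'s appearing in the definition of $\hat Q_r$, so $\delta$ is $\epsilon$-compatible: the connecting maps for the $n$- and $(n-r)$-towers fit into a square commuting up to $\epsilon$. Combined with the factorization $\hat Q_r(\sigma^{-n}x)=\epsilon^*\hat Q_0(\sigma^{-(n-r)}x)$, this reduces the problem to the base case $r=0$ at each tower height $m$. For the base case I would unwind the definition of $\hat Q_0$ via the $w$-transfer $D_{m,2}X\to D_{\infty,2}D_{m,1}X$ and track the composite $w\circ\delta$ on cohomology; by a universal computation on Eilenberg--MacLane spaces, together with the Nishida/Kudo--Araki-style relation between $\hat Q_0$ and top squares, this composite delivers the expected $Sq^{|x|-m+1}(x)$, the unit shift in the exponent being absorbed by the connecting map of the cofibration.

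The main obstacle is the geometric identification of $\delta$ in both (i) and (ii). In (i) the task is recognizing $\Sigma\epsilon\circ\delta$ as the diagonal; in (ii) it is tracking $\delta$ through the $w$-transfer to extract the right Steenrod operation. Both rely on the explicit Arone model and its cohomological analysis developed in \cite{ak}, which supplies the geometric underpinnings of the spectral sequence. By contrast, part (iii) is free, and the reduction in (ii) to the $r=0$ base case is essentially bookkeeping.
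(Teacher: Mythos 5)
Your part (iii) matches the paper's: the paper simply records that $d_1$ is a derivation of the $*$-product, and your observation that $d_1$ vanishes on the $-1$ line (since $E_1^{0,*} = H^*(S^0)$ is trivial in positive degrees) is the same content.

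Your part (i) is correct but takes a genuinely different route. You propose to identify $\Sigma\epsilon\circ\delta$ directly as the desuspended reduced diagonal, i.e.\ to read the $k$-invariant of the second Goodwillie stage off Arone's model. The paper instead uses the compatibility of the spectral sequences under the evaluation maps to reduce to the case $n=1$, where $d_1(x\otimes y)=x\cup y$ is the classical bar-differential formula in the Eilenberg--Moore spectral sequence. Your argument in effect re-derives the geometric input behind that EMSS formula; both are valid, though the paper's reduction is shorter and offloads the geometry to a well-known fact.

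Your part (ii) starts the same way (reduce to $r=0$ via $\epsilon$-compatibility), but the base case is where a real gap appears. The paper makes a further reduction, from general $n$ with $r=0$ to $n=\infty$, and then argues by pure naturality: $d_1\circ\hat Q_0$ is additive on $H^d(X)$ (by \propref{Q additive prop}, since the $x*y$ correction term is killed by $d_1$), hence is a stable operation $H^d\ra H^{2d+1}$ on spectra, hence is given by some $a\in\A^{d+1}$. Taking $X=\Sigma^dH\Z/2$, the spectral sequence converges to $H^*(K(\Z/2,d))$, and the fact that $\Sigma^{-1}E_\infty^{-1,*}$ must be the free unstable module $F(d)$, together with a connectivity count, forces $a=Sq^{d+1}$. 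Your proposed alternative --- ``unwind $\hat Q_0$ via the $w$-transfer, track $w\circ\delta$, and invoke a universal computation on Eilenberg--MacLane spaces together with a Nishida/Kudo--Araki-style relation between $\hat Q_0$ and top squares'' --- is gesturing at related territory but is not a proof. Tracking $\delta$ through $w$ does not by itself extract the operation; Kudo transgression is a fact about the Serre spectral sequence, not about this tower; and the $+1$ shift in the square's index is exactly the point that needs justification. To close the gap you should spell out the naturality-plus-universal-example argument (or make genuinely precise what the ``universal computation'' on $\Sigma^dH\Z/2$ is and how it determines the $\A$-element).
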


\begin{proof}  Formula (iii) is clear, as $d_1$ is a derivation.

Formula (i) reduces to the case when $n=1$, where it becomes the well known formula
$$ d_1(x \otimes y) = x \cup y$$
in the bar construction associated to the Eilenberg--Moore spectral sequence.

Formula (ii) reduces to the case when $r=0$, and then to the case when $n=\infty$, where we wish to show that, for $X$ a 0-connected spectrum and $x \in H^d(X)$,
$$d_1(\sigma^2\hat Q_0(x)) = \sigma Sq^{d+1}x.$$
As the left side of the equation is natural, there must be an element $a \in \A^{d+1}$ such that
$$d_1(\sigma^2 \hat Q_0(x)) = \sigma ax.$$
To show that $a$ must be $Sq^{d+1}$, we note that when $X = \Sigma^dH\Z/2$, so that the spectral sequence converges to $H^*(K(\Z/2,d))$, $\Sigma^{-1} E_{\infty}^{-1,*}$ will be $F(d)$, the free unstable quotient of $\Sigma^d\A$. For connectivity reasons, the only way for this to happen is if $d_1(\sigma^2\hat Q_0(u)) = \sigma Sq^{d+1}u$, when $u$ is the bottom class of $\Sigma^d \A$.
\end{proof}
\begin{rem} The proposition should be compared to the homology formulae in \cite[6.2]{smirnov}; in particular, Smirnov's formula on page 124, three lines before his second theorem.
\end{rem}
\begin{cor} \label{d1 corollary} In the spectral sequence computing $H^*(\Omega^n X)$ with $X$ an $n$--connected space, $\Sigma^{-1}E_2^{-1,*}$ will be the maximal unstable quotient of $$\Sigma^{-n}(H^*(X)/(\text{$\cup$-decomposables})).$$
Even more is true if $\tilde H^*(X) \simeq \Sigma^n M$ with $M \in \U$, and has no nontrivial cup products: then $E_3^{-1,*} = E_2^{-1,*} = E_1^{-1,*}$, and $E_2^{-2,*} = E_1^{-2,*}$.
\end{cor}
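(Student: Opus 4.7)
The plan is to identify $E_2^{-1,*}$ as the cokernel of the incoming differential $d_1: E_1^{-2,*}\to E_1^{-1,*}$, noting that the outgoing $d_1$ from column $-1$ is automatically zero since $E_1^{0,*}=0$. By the structure theorem for $H^*(D_{n,*}X)$ at the end of Section~\ref{cohomology of extended powers}, combined with \propref{unstable prop}, $E_1^{-2,*}$ is $*$-generated by the indecomposable classes $L_{n-1}(\sigma^{-n}x\otimes\sigma^{-n}y)$ and $\hat Q_r(\sigma^{-n}x)$ for $0\le r<n$, together with $*$-products of pairs of elements from $E_1^{-1,*}$. By \propref{d1 prop}(iii) and the derivation property, $d_1$ vanishes on $*$-decomposables, so its image is the $\A$-submodule generated by the values given by \propref{d1 prop}(i, ii) on the indecomposable generators.

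Those values are (a suspension of) the cup products $\sigma^{-n}(x\cup y)$ and the operations $\sigma^{-n}Sq^s(x)$ with $|x|-n+1\le s\le |x|$. The upper bound $s\le|x|$ is automatic from the unstability of $H^*(X)$, so after removing the outer $\Sigma$, the image is exactly the $\A$-submodule of relations that one quotients by to form the maximal unstable quotient of $\Sigma^{-n}(H^*(X)/(\text{$\cup$-decomposables}))$; this establishes the first claim. For the ``even more'' clause, suppose $\tilde H^*(X)\simeq\Sigma^n M$ with $M\in\U$ and trivial cup products. Writing $x=\sigma^n z$, one has $x\cup y=0$ by hypothesis and $Sq^{r+|x|-n+1}(x)=\sigma^n Sq^{r+|x|-n+1}(z)=0$ for all $r\ge 0$, since $r+|x|-n+1>|z|$ and $M$ is unstable. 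So $d_1$ vanishes on all indecomposable generators, hence on all of $E_1^{-2,*}$ by the derivation property, giving $E_2^{-1,*}=E_1^{-1,*}$. A parallel argument handles $d_1:E_1^{-3,*}\to E_1^{-2,*}$: its $*$-decomposable inputs die by the derivation property applied to the just-established $d_1=0$ on $E_1^{-1,*}$ and $E_1^{-2,*}$, and its $L_{n-1}$-type indecomposable inputs produce outputs (by the natural extension of \propref{d1 prop} along the lines of the bar-construction $d_1$ formula for $n=1$) involving only cup products and forbidden Steenrod operations, both of which vanish under our hypotheses. This yields $E_2^{-2,*}=E_1^{-2,*}$.

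The main obstacle is the remaining equality $E_3^{-1,*}=E_2^{-1,*}$, which demands the vanishing of $d_2:E_2^{-3,*}\to E_2^{-1,*}$, and for which the excerpt supplies no direct formula. The plan here is to exploit that the spectral sequence carries compatible $\A$-module and Hopf algebra structures (the two propositions opening \secref{d1 section}), so $d_2$ is both $\A$-linear and a coderivation. Its effect on $E_2^{-3,*}$ is therefore determined by its values on indecomposable classes modulo $*$-decomposables, and under the hypothesis that $\tilde H^*(X)=\Sigma^n M$ with $M$ unstable and cup products trivial, any candidate nonzero output would have to be a cup product or a Steenrod operation on a class in $\Sigma^n M$ that is forbidden by the unstability of $M$, mirroring the $d_1$ analysis above. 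Carrying this out rigorously, by identifying the geometric origin of $d_2$ on an $L_{n-1}$-generator coming from the $P^n_3(X)\to P^n_1(X)$ portion of the Goodwillie tower, is the technical crux of the proof.
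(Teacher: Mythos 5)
Your handling of the first claim and of $d_1$ on column $-2$ matches the paper. For the rest your route diverges from the paper's, and the $d_2$-vanishing --- which you candidly flag as the technical crux --- remains a genuine gap in your proposal.

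The paper's proof is much shorter because it rests on a single structural claim that you do not invoke: that $E_1^{-3,*}$ is spanned by $*$-decomposables, and consequently (once $d_1$ is known to vanish on columns $-1$ and $-2$) that $E_2^{-3,*}$ is spanned by algebra decomposables as well. Combined with the derivation property of each $d_r$, this makes $d_1$ on $E_1^{-3,*}$ and $d_2$ on $E_2^{-3,*}$ vanish at once, with no need for any formula for a differential applied to an $L_{n-1}$-type class. Your worry about the weight-$3$ classes $L_{n-1}(x_1\otimes x_2\otimes x_3)$ is not unreasonable --- they appear among the algebra generators in the structure theorem of \secref{cohomology of extended powers}, and for $n\geq 2$ with $\dim \tilde H^*(X)\geq 2$ they are nonzero modulo shuffle decomposables, so the paper's decomposability claim for column $-3$ is itself worth scrutinizing. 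But your proposal does not supply a substitute: for $d_1$ you appeal to an unstated ``natural extension'' of \propref{d1 prop} to triple tensors, and for $d_2$ you argue only from $\A$-linearity and the coderivation property, which (as you yourself observe) do not determine $d_2$ on $*$-indecomposable classes. Until either the paper's decomposability claim is justified, or a concrete formula for $d_2$ on the $L_{n-1}$-generators is extracted from the geometry of $P_3^n(X)\to P_1^n(X)$, the assertion $E_3^{-1,*}=E_2^{-1,*}$ is not established by your argument.
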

\begin{proof}  The first statement follows evidently from the previous proposition.  In the situation of the second statement, the assumption then tells us that $d_1:E_1^{-2,*} \ra E_1^{-1,*}$ is identically zero.  Since $E_1^{-3,*}$ is spanned by $*$--decomposables, the fact that $d_1$ is a derivation allows us to conclude that $d_1:E_1^{-3,*} \ra E_1^{-2,*}$ is also identically zero.  Thus we have that both $E_2^{-2,*} = E_1^{-2,*}$ and $E_2^{-1,*} = E_1^{-1,*}$. It follows that $E_2^{-3,*}$ is again spanned by algebra decomposables, and so, as before, we conclude that $d_2:E_2^{-3,*} \ra E_2^{-1,*}$ is identically zero.
\end{proof}
The is a similar corollary in the $n=\infty$ case.
\begin{cor}
In the spectral sequence computing $H^*(\Omega^{\infty} X)$ with $X$ a $0$--connected spectrum, $\Sigma^{-1}E_2^{-1,*}$ will be the unstable quotient of $H^*(X)$. Even more is true if $\tilde H^*(X) \simeq M$ with $M \in \U$: then $E_3^{-1,*} = E_2^{-1,*} = E_1^{-1,*}$, and $E_2^{-2,*} = E_1^{-2,*}$.
\end{cor}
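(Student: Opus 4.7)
The plan is to adapt the proof of \corref{d1 corollary} to the $n=\infty$ setting. The one conceptual simplification is that $E_1^{-1,*} = \Sigma H^*(D_{\infty,1}X) = \Sigma H^*(X)$ carries no cup-product data at the $E_1$-level, since there is no $L_{n-1}$ operation when $n=\infty$; all of $d_1\colon E_1^{-2,*}\to E_1^{-1,*}$ therefore comes from the $\hat Q_r$ generators, with $r$ now unrestricted.

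For the first claim I would apply the $n=\infty$ version of \propref{d1 prop}: part (ii) gives $d_1(\sigma^2\hat Q_r(x)) = \sigma Sq^{r+d+1}(x)$ for $x\in H^d(X)$ and $r\geq 0$, while part (iii) kills $*$-products. Since $H^*(D_{\infty,2}X)$ is $*$-generated by the classes $\hat Q_r(x)$, and since the exponents $s=r+d+1$ range over exactly the integers with $s>d$, the image of $d_1$ is the submodule generated by all classes $Sq^s(x)$ with $s>|x|$. Therefore $\Sigma^{-1}E_2^{-1,*}$ is the unstable quotient of $H^*(X)$. For the second claim, if $H^*(X)\simeq M\in\U$ then every such $Sq^s(x)$ is already zero, so $d_1$ annihilates every $\hat Q_r$ generator of $E_1^{-2,*}$ and hence vanishes identically; this yields $E_2^{-2,*}=E_1^{-2,*}$ and $E_2^{-1,*}=E_1^{-1,*}$.

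To upgrade to $E_3^{-1,*}=E_2^{-1,*}$, I would argue as in \corref{d1 corollary}: since $3$ is not of the form $2^l$, no iterated $\hat Q$ word produces a weight-$3$ class, so $E_1^{-3,*}$ is spanned entirely by $*$-products of classes from columns $-1$ and $-2$, and this $*$-decomposability descends to $E_2^{-3,*}$. The derivation property of $d_2$ on the DG Hopf algebra $E_2$, together with the vanishing of $d_2$ on the two lower columns for dimensional reasons (the targets $E_2^{1,*}$ and $E_2^{0,*}$ are trivial in positive total degree), then forces $d_2=0$ on all of $E_2^{-3,*}$. The step most worth double-checking is the structural statement that no ``pure'' generator of $H^*(D_{\infty,*}X)$ lives in weight $3$, but this is transparent from the description of generators as $\hat Q$-iterates starting from $H^*(X)=H^*(D_{\infty,1}X)$, since each $\hat Q_r$ doubles the weight.
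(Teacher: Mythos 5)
Your argument follows the same route the paper uses for \corref{d1 corollary}, adapted to $n=\infty$, and it is essentially correct; the simplification you point out (no $L_{n-1}$ operation when $n=\infty$, so that $d_1$ on column $-2$ is entirely governed by the $\hat Q_r$ generators and $E_1^{-3,*}$ is visibly $*$-decomposable since $3$ is not a power of $2$) is exactly the right observation. There is, however, one bookkeeping gap in the middle paragraph: from $d_1\vert_{E_1^{-2,*}}=0$ alone you may conclude $E_2^{-1,*}=E_1^{-1,*}$, but \emph{not} yet $E_2^{-2,*}=E_1^{-2,*}$ — for that you must also kill the incoming differential $d_1\colon E_1^{-3,*}\to E_1^{-2,*}$. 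The ingredient you need is precisely the one you introduce a paragraph later for $d_2$: since $E_1^{-3,*}$ is spanned by $*$-products of classes from columns $-1$ and $-2$, and $d_1$ is a derivation vanishing on $E_1^{-1,*}$ (target $E_1^{0,*}$ is trivial in positive degree) and on $E_1^{-2,*}$ (by unstability of $M$), one gets $d_1\vert_{E_1^{-3,*}}=0$ as well. This is the order in which the paper handles the $n<\infty$ case, and moving the $*$-decomposability observation forward — so that it feeds both the $d_1$ and the $d_2$ vanishing — closes the gap. With that reorganization the remainder of your argument, including the dimensional vanishing of $d_2$ on columns $-1$ and $-2$ and the derivation argument on $E_2^{-3,*}$, goes through as written.
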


\section{Proof of \thmref{main theorem}} \label{main theorem proof}

Recall the assumptions on $M$ in the theorem.  We have numbers $c\leq e$ such that $M^s \neq 0$ only if $c\leq s\leq e$.  The statement that $M$ is {\em not} the desuspension of an unstable module means precisely that there exists $x \in M$ such that $Sq^{|x|}x \neq 0$.  We fix such an element and let $d=|x|$, so that $c \leq d \leq 2d \leq e$.

Assuming that $2^k > e-c$, it is easily verified that $M \otimes \Phi(k,k+2)$ is the module as pictured in (\ref{M tensor Phi picture}):
\begin{equation*}
\xymatrix{
 *+[F]{{\hspace{.1in}}M_0{\hspace{.1in}}} \ar@/^1pc/[rr]^{Sq^{2^k}}_{\sim} && *+[F]{{\hspace{.1in}}M_1{\hspace{.1in}}} \ar@/^1pc/[rrrr]^{Sq^{2^{k+1}}}_{\sim} &&&& *+[F]{{\hspace{.1in}}M_2{\hspace{.1in}}}  }
\end{equation*}
\noindent where $M_i = M \otimes \langle t^{2^{k+i}} \rangle$.  We let $a = x \otimes t^{2^k} \in M_0$, $b = x \otimes t^{2^{k+1}} \in M_1$, and $c = x \otimes t^{2^{k+2}} \in M_2$.  Thus $|a| = d + 2^k$, $|b| = d + 2^{k+1}$, $|c| = d + 2^{k+2}$, $Sq^{2^{k}}a = b$, $Sq^{2^{k+1}}b = c$, and $Sq^dc \neq 0$.

With this notation, we give the quick proof of \thmref{main theorem}(a).  Assuming that $2^k > e-d$, we show that there is no unstable algebra structure on $M \otimes \Phi(k,k+2)$, so that there can be no space $X$ such that $\tilde H^*(X) \simeq M \otimes \Phi(k,k+2)$.

The proof of this is simple.
\begin{equation*}
\begin{split}
Sq^{2^k}(a \cup b) & = b \cup b \text{ \ (by the Cartan formula)} \\
  & = Sq^{d+2^{k+1}}b \text{ \ (by the Restriction axiom)} \\
  & = Sq^d c \neq 0.
  \end{split}
\end{equation*}
Thus $a\cup b \neq 0$.  But $|a\cup b| = 2d+ 3\cdot 2^k$, which we claim is a degree in the `gap' between $M_1$ and $M_2$, so that $a\cup b =0$, giving us a contradiction. In other words, we claim that
$$e+2^{k+1} < 2d + 3\cdot 2^k < c + 2^{k+2}.$$
The first inequality follows by adding $2^{k+1}$ to the inequalities
$$ e < c + 2^k \leq 2d +2^k,$$
while the second inequality follows by adding $3\cdot 2^{k}$ to the inequalities
$$ 2d \leq e < c +2^k.$$

We now begin the longer proof of \thmref{main theorem}(b). So let $n>0$, and suppose that $\tilde H^*(X) \simeq \Sigma^n M \otimes \Phi(k,k+2)$, and has trivial cup products.  We can assume that $X$ is a CW complex. For technical reasons\footnote{This will be needed to ensure that $\sigma Sq^d(c) \in E_1^{-1,2d+2^k+1}$ is not in the image of $d_3$: see \lemref{nonzero lemma}.  Replacing $X$ by $Y$ would not be needed if $d \leq 2c$.}, rather than working with $X$, we work with the quotient $Y = X/X_{d+n+2^k-1}$.  Since $H^*(Y) \ra H^*(X)$ is an isomorphism for $*> d+n+2^k$ and is epic if $*= d+n+2^k$, one easily deduces that $\tilde H^*(Y) \simeq \Sigma^n N$ with $N \in \U$, still has trivial cup products, and  $N$ is as pictured:
\begin{equation} \label{1-line}
\xymatrix{
 *+[F]{{\hspace{.05in}}N_0{\hspace{.1in}}} \ar@/^1pc/@{->>}[rr]^{Sq^{2^k}} && *+[F]{{\hspace{.1in}}M_1{\hspace{.1in}}} \ar@/^1pc/[rrrr]^{Sq^{2^{k+1}}}_{\sim} &&&& *+[F]{{\hspace{.1in}}M_2{\hspace{.1in}}}  }
\end{equation}
\noindent where $N_0$, $M_1$, and $M_2$ are nonzero only in degrees in the intervals $[d+2^k,e+2^k]$, $[c+2^{k+1},e+2^{k+1}]$, and $[c+2^{k+2},e+2^{k+2}]$.

Choosing a `new' $a \in N_0$ mapping onto the `old' $a \in M_0$, we have as before: $|a| = d + 2^k$, $|b| = d + 2^{k+1}$, $|c| = d + 2^{k+2}$, $Sq^{2^{k}}a = b$, $Sq^{2^{k+1}}b = c$, and $Sq^dc \neq 0$.

We assume the inequality
\begin{equation} \label{constraint} 2^k > 4e-2c+2n-2, \end{equation} and we show that this leads to a contradiction by showing that $H^*(\Omega^nY)$, as computed by our spectral sequence, can not admit an unstable algebra structure.

As a first observation, we note that \corref{d1 corollary} applies, so that $E_3^{-1,*} = E_2^{-1,*} = E_1^{-1,*}$, and $E_2^{-2,*} = E_1^{-2,*}$.

A picture of $\Sigma^{-1}E_1^{-1,*}$ is given by (\ref{1-line}), and is all permanent cycles. Thus there exist $\alpha \in H^{d+2^k}(\Omega^n Y)$, $\beta \in H^{d+2^{k+1}}(\Omega^n Y)$, and $\gamma \in H^{d+2^{k+2}}(\Omega^n Y)$, respectively represented by $a$, $b$, and $c$, and we have $Sq^{2^k}\alpha = \beta$, and $Sq^{2^{k+1}}\beta = \gamma$.

A picture of $\Sigma^{-2}E_1^{-2,*}$ in degrees less than $2c+2^{k+2}$ is given by
\begin{equation} \label{2-line}
\xymatrix{
 *+[F]{N_0\cdot N_0}  && *+[F]{N_0\cdot M_1} }
\end{equation}
where $N_0\cdot N_0$ denotes the span of all elements of the form $\hat Q_r(x)$, $x*y$, or $L_{n-1}(x \otimes y)$ with $x,y \in N_0$, and $N_0\cdot M_1$ denotes the span of $x*y$ and $L_{n-1}(x \otimes y)$ with $x \in N_0$ and $y \in M_1$.

$N_0\cdot N_0$ is nonzero only in the interval $[2d+2^{k+1},2e+2^{k+1} + n-1]$, and includes the element $\hat Q_0(a)$ in lowest degree. As $E_2^{-2,*} = E_1^{-2,*}$, this is a permanent cycle\footnote{The key point is that, since $Sq^{d+1}x=0$, $d_1 \hat Q_0(a) = Sq^{d+2^k+1}a = Sq^{d+1}b = 0$ also.}, and so represents an element $\delta \in H^{2d+2^{k+1}}(\Omega^n Y)$.

$N_0\cdot M_1$ is nonzero only in the interval $[c+d+ 3\cdot 2^k, 2e + 3\cdot 2^k + n-1]$, and includes the element $a*b$, which represents $\alpha \cup \beta \in H^{2d+3\cdot 2^k}(\Omega^n Y)$.

\begin{lem} $Sq^{2^k}\delta = \alpha \cup \beta$.
\end{lem}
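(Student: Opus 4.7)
The plan is to apply the Nishida relation (\propref{Nishida prop}(i)) to the representative $\hat Q_0(a) \in E_1^{-2,*}$ of $\delta$ and argue that every term on the right-hand side except $a*b$ vanishes in $E_1^{-2,*}$ by a degree count. Since the spectral sequence is multiplicative and $\A$-linear, $Sq^{2^k}\delta$ is then represented by $a*b$ in $E_{\infty}^{-2,*}$; and $a*b$ already represents $\alpha \cup \beta$ by the paragraph just preceding the lemma. Hence $Sq^{2^k}\delta = \alpha \cup \beta$ (modulo $F^{-3}$, which is all that will be needed downstream).

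Explicitly, the Nishida relation with $|a| = d + 2^k$ and $s = 2^k$ reads
\[ Sq^{2^k}\hat Q_0(a) = \sum_{t} \binom{|a|-t}{2^k-2t}\,\hat Q_{2^k-2t}(Sq^t a) \;+\; \sum_{0 \leq t < 2^{k-1}} Sq^t a \ast Sq^{2^k-t} a. \]
Every term on the right has total internal degree $2d + 3\cdot 2^k$. For $0 \leq t < 2^k$, the operation $Sq^t$ cannot push $a = x\otimes t^{2^k}$ out of $N_0$, since every $\A$-operation carrying $N_0$ into $M_1$ has degree at least $2^k$. So $Sq^t a \in N_0$ for every $t$ in the sums; consequently each $\hat Q_{2^k-2t}(Sq^t a)$ lies in the $N_0 \cdot N_0$ summand of $E_1^{-2,*}$, and so does each product $Sq^t a \ast Sq^{2^k-t} a$ with $1 \leq t < 2^{k-1}$.

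The key step is the degree count. By (\ref{2-line}), $N_0 \cdot N_0$ vanishes above total degree $2e + 2^{k+1} + n - 1$. Using $d \geq c$, hypothesis (\ref{constraint}) gives $2^k > 2e - 2d + n - 1$, equivalently $2d + 3\cdot 2^k > 2e + 2^{k+1} + n - 1$, so every $N_0 \cdot N_0$ contribution above vanishes identically. (The same hypothesis also gives $2d + 3\cdot 2^k < 2c + 2^{k+2}$, so that the description (\ref{2-line}) genuinely accounts for all of $\Sigma^{-2} E_1^{-2,*}$ at the relevant degree.) The sole survivor is the $t=0$ term, $a \ast Sq^{2^k}a = a \ast b$. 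The only real obstacle is bookkeeping with the degree bounds; once one sees that (\ref{constraint}) is engineered precisely to push $2d + 3\cdot 2^k$ past the top of the $N_0 \cdot N_0$-range, the lemma is a direct application of Nishida.
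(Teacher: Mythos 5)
Your Nishida computation is essentially the paper's, just packaged a little differently. The paper deduces from (\ref{constraint}) the inequalities $2^k - 2d \geq n$ and $2^{k-1} > e$, and uses $\hat Q_r = 0$ for $r \geq n$ plus the vanishing of $Sq^t a$ for $t > e-d$ to kill all but the $t=0$ term; you instead observe that $N_0\cdot N_0$ vanishes in total degree $2d + 3\cdot 2^k$ and that all but the $t=0$ term lands there. Both routes are fine and you handle the degree bookkeeping correctly, including the point that the picture in (\ref{2-line}) really does cover all of $E_1^{-2,*}$ at this degree.

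The gap is in the final step, the passage from an equality at $E_\infty$ to an equality in $H^*(\Omega^n Y)$. With the paper's convention $F^{-1} \subseteq F^{-2} \subseteq F^{-3} \subseteq \cdots$, knowing that $Sq^{2^k}\delta$ and $\alpha\cup\beta$ have the same class in $E_\infty^{-2} = F^{-2}/F^{-1}$ tells you only that their difference lies in $F^{-1}$, not $F^{-3}$ (the latter is a weaker statement, in fact vacuous here since both elements already sit in $F^{-2} \subset F^{-3}$). You have the filtration running in the wrong direction. More importantly, the claim that an equality ``modulo filtration'' is all that is needed downstream is false: the Corollary needs $Sq^{2^k}Sq^{2^k}\delta = Sq^d\gamma \neq 0$ on the nose, and if $Sq^{2^k}\delta = \alpha\cup\beta + \eta$ with $\eta \in F^{-1}H^{2d+3\cdot 2^k}$, then $Sq^{2^k}Sq^{2^k}\delta = Sq^d\gamma + Sq^{2^k}\eta$ with both summands in $F^{-1}$ of the same degree, so a priori they could cancel. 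The paper closes this by a separate observation you omit: $E_\infty^{-1,*}$ (equivalently $F^{-1}$, since $F^0 = 0$ in positive degrees) vanishes in total degree $2d + 3\cdot 2^k$ because $e + 2^{k+1} < 2d + 3\cdot 2^k < c + 2^{k+2}$ puts that degree in the gap of $N$. So the needed conclusion is in fact exact equality, and it does take an extra argument; conveniently, it follows from the same inequality $2^k > e-c$ you already invoked, so your proof is one sentence away from being complete once the filtration confusion is fixed.
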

\begin{proof} Our constraint (\ref{constraint}) implies that $2^k-2d \geq n$ and also that $2^{k-1}>e$. Using these inequalities, one easily checks that the formula for $Sq^{2^k}\hat Q_0(a)$ given by \propref{Nishida prop} simplifies to yield $$Sq^{2^k}\hat Q_0(a) = a*Sq^{2^k}a = a*b.$$  As both $Sq^{2^k}\delta$ and $\alpha \cup \beta$ are represented by $a*b$, it follows that $Sq^{2^k}\delta - \alpha \cup \beta$ is represented by something in bidegree $(-1,2d+3\cdot 2^k +1)$.  But there is nothing nonzero in this bidegree because (\ref{constraint}) implies that $2^k>e-c$, and this then implies that $e+2^{k+1}<2d+3\cdot 2^k< c+2^{k+2}$.
\end{proof}

\begin{lem} $Sq^{2^k}(\alpha \cup \beta) = Sq^d \gamma$
\end{lem}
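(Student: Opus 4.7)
The strategy is to transform $Sq^{2^k}(\alpha \cup \beta)$ into $Sq^d\gamma$ using the Cartan formula, the Restriction axiom, and one Adem relation, all available under the (assumed) unstable algebra structure on $H^*(\Omega^n Y)$.

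Expand by Cartan: $Sq^{2^k}(\alpha \cup \beta) = \sum_{i+j=2^k} Sq^i\alpha \cup Sq^j\beta$. The distinguished piece $i=2^k, j=0$ gives $\beta \cup \beta$ (using $Sq^{2^k}\alpha = \beta$), which by Restriction equals $Sq^{d+2^{k+1}}\beta$ since $|\beta| = d+2^{k+1}$. Rewrite the target as $Sq^d\gamma = Sq^d Sq^{2^{k+1}}\beta$; since $d < 2\cdot 2^{k+1}$, the Adem relation gives $Sq^d Sq^{2^{k+1}} = \sum_{t\geq 0}\binom{2^{k+1}-1-t}{d-2t}Sq^{d+2^{k+1}-t}Sq^t$, with leading binomial $\binom{2^{k+1}-1}{d} \equiv 1 \pmod 2$ by Lucas (since $d < 2^{k+1}$). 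Hence $Sq^d\gamma = Sq^{d+2^{k+1}}\beta + \Sigma_A$, where $\Sigma_A := \sum_{t\geq 1}\binom{2^{k+1}-1-t}{d-2t}Sq^{d+2^{k+1}-t}Sq^t\beta$.

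The lemma thereby reduces to matching the Cartan tail $\Sigma_C := \alpha \cup Sq^{2^k}\beta + \sum_{0<i<2^k}Sq^i\alpha \cup Sq^{2^k-i}\beta$ with $\Sigma_A$. The crucial computations are: (i) $Sq^{2^k}\beta$ has zero class on the $-1$ line, since $Sq^{2^k}b = 0$ in $N$ (directly, $Sq^{2^k}(x \otimes t^{2^{k+1}}) = Sq^{2^k}x \otimes t^{2^{k+1}} = 0$ because $|x| = d < 2^k$), forcing $\alpha \cup Sq^{2^k}\beta$ into $F^{-3}$; (ii) each mixed product $(Sq^i a)*(Sq^{2^k-i}b)$ lives in the $N_0 \cdot M_1$ piece of the $-2$ line at total degree $2d + 2^{k+2}$, which constraint (\ref{constraint}) places above the support interval $[c+d+3\cdot 2^k,\, 2e+3\cdot 2^k+n-1]$ shown in (\ref{2-line}); and (iii) each summand of $\Sigma_A$ has $-1$-line image $Sq^{d-t}Sq^t x \otimes t^{2^{k+2}}$ in the $M_2$ summand of $N$, and the weighted sum $\sum_{t\geq 1}\binom{2^{k+1}-1-t}{d-2t}Sq^{d-t}Sq^t x$ vanishes in $M$ — this is the algebraic identity forced by equating the direct value $Sq^d c = Sq^d x \otimes t^{2^{k+2}}$ with the Adem expansion of $Sq^d Sq^{2^{k+1}}b$ inside $N$. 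Thus both tails land in $F^{-2}$ with zero $-1$-line image, giving the equation modulo deep filtration.

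The main obstacle is matching $\Sigma_C$ and $\Sigma_A$ on the nose rather than merely modulo $F^{-2}$; this may require additional book-keeping with the filtration in the spectral sequence or the exploitation of further identities (e.g., a Nishida-style analysis of the $-2$ line representative $Sq^{2^k}(a*b)$). That said, the argument above already shows $Sq^{2^k}(\alpha \cup \beta) \equiv Sq^d\gamma \pmod{F^{-2}H^*(\Omega^n Y)}$, and since $Sq^{2^k}(\alpha \cup \beta)$ lies in $F^{-2}$ automatically (as $\alpha \cup \beta \in F^{-2}$ and the filtration is $\A$-stable), this is the version of the equation that feeds directly into the final contradiction.
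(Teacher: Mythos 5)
Your strategy has the right ingredients but a real gap, which you acknowledge, and which the paper closes by exploiting a point you overlooked: with the paper's (increasing) filtration $F^{-1} \subseteq F^{-2} \subseteq \cdots$, the piece $F^{-1}H^*(\Omega^n Y)$ \emph{is} $E_\infty^{-1,*}$ (in positive total degree), hence is itself an $\A$-module quotient of $E_1^{-1,*} \cong \Sigma^{1-n}H^*(Y)$, with $\alpha,\beta,\gamma$ the images of $a,b,c$. So an element of $F^{-1}$ whose $E_\infty^{-1}$-image you compute to be zero is not merely ``in deep filtration'': it is zero. This collapses all three of your observations (i)--(iii). Concretely: for $0 \le i < 2^k$ one of $Sq^i a$, $Sq^{2^k-i}b$ already vanishes in the module pictured in (\ref{1-line}) (the relevant degree falls in a gap), so $Sq^i\alpha = 0$ or $Sq^{2^k-i}\beta = 0$ in $H^*(\Omega^n Y)$, and every Cartan cross-term is \emph{zero}, not just pushed into $F^{-3}$. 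Likewise your $\Sigma_A$ is the image in $F^{-1}$ of $\sum_{t\ge 1}\binom{2^{k+1}-1-t}{d-2t}Sq^{d+2^{k+1}-t}Sq^t b$, which is zero in $N$, so $\Sigma_A = 0$. There is nothing to match ``on the nose'' --- both tails vanish outright, and the weakened ``$\pmod{F^{-2}}$'' conclusion you fall back on is actually vacuous under the paper's conventions, since $F^{-1} \subseteq F^{-2}$ already.

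Once you have the observation above, the Adem relation is unnecessary, and that is the genuine difference from the paper's argument. The paper simply writes $Sq^{2^k}(\alpha\cup\beta) = \beta^2 = Sq^{d+2^{k+1}}\beta = Sq^d\gamma$: Cartan with vanishing cross-terms for the first equality, the Restriction axiom for the second, and for the third the bare fact that $Sq^{d+2^{k+1}}b$ and $Sq^d c$ are \emph{the same element} of $N$ (both equal $Sq^d x\otimes t^{2^{k+2}}$), so their images $Sq^{d+2^{k+1}}\beta$ and $Sq^d\gamma$ in $F^{-1}$ coincide. Your route through the Adem relation $Sq^d Sq^{2^{k+1}} = Sq^{d+2^{k+1}} + \cdots$ can be made to work, but it introduces the tail $\Sigma_A$ only to show it vanishes, and it creates the (illusory) difficulty of matching two tails that are each individually zero. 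The lesson: everything in this lemma takes place in $F^{-1} = E_\infty^{-1}$, where computations reduce to the explicit module $N$; no bookkeeping on the $-2$ or $-3$ lines is required.
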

\begin{proof} $Sq^{2^k}(\alpha \cup \beta) = \beta^2 = Sq^{d+2^{k+1}}\beta = Sq^d \gamma$.
\end{proof}

\begin{lem} \label{nonzero lemma} $Sq^d\gamma \neq 0$.
\end{lem}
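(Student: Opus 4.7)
The plan is to show that $\sigma(Sq^d c)\in E_1^{-1,\,2d+2^{k+2}+1}$, which is the $E_\infty$-representative of $Sq^d\gamma$ at the top of the filtration, is a nonzero permanent cycle that is not hit by any differential. Once established, this forces $Sq^d\gamma\in F^{-1}\setminus F^{-2}$, hence $Sq^d\gamma\neq 0$, and also furnishes the contradiction driving the whole proof, since the previous two lemmas give $Sq^d\gamma=Sq^{2^k}(\alpha\cup\beta)\in F^{-2}$.

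First, the class $\sigma(Sq^dc)$ is nonzero in $E_1$ because $Sq^dc\neq 0$ in $N_2$, by the hypothesis $Sq^{|x|}x\neq 0$. Outgoing differentials $d_r$ from the $-1$ line land in filtration $-1+r\geq 0$, which is zero since the tower begins at $P_1^n$; so $\sigma(Sq^dc)$ is automatically a permanent cycle. What is left is to rule out all incoming differentials. A $d_r$ landing at $(-1,\,2d+2^{k+2}+1)$ has source on the $-(r+1)$ line at total cohomological degree $2d+2^{k+2}-1$, so I will check, case by case in $r$, that no such source can be nonzero.

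The cases $r=1$ and $r=2$ are handled directly by \corref{d1 corollary}, which applies to $Y$ (since $\tilde H^*(Y)\simeq\Sigma^n N$ with $N$ unstable and cup products trivial) and gives $d_1:E_1^{-2,*}\to E_1^{-1,*}$ and $d_2:E_2^{-3,*}\to E_2^{-1,*}$ identically zero. For $r\geq 3$ the source lies on the $-j$ line with $j\geq 4$, and here I would close with a degree count. By the structure theorem at the end of \secref{cohomology of extended powers}, every class in $H^*(D_{n,j}\Sigma^{-n}Y)$ is a $*$-product of generators of the form $\hat Q_{r_1}\cdots\hat Q_{r_l}L_{n-1}(x_1\otimes\cdots\otimes x_k)$ with $x_i\in N$ and weight $2^l k$. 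Because each $\hat Q_r$ at least doubles cohomological degree, $L_{n-1}$ is the sum of its input degrees plus the nonnegative correction $(k-1)(n-1)$, and $*$ is additive in both weight and degree, any weight-$j$ class has cohomological degree at least $j\cdot q$, where $q=\min\deg N$.

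The truncation $Y=X/X_{d+n+2^k-1}$ is designed precisely so that $q=d+2^k$: it removes the classes of $M_0$ lying strictly below this level, while (\ref{constraint}) readily yields $d-c<2^k$, i.e.\ $d+2^k<c+2^{k+1}$, so no element of $M_1$ or $M_2$ sits lower. Hence every class on the $-j$ line has degree $\geq j(d+2^k)$; for $j\geq 4$ this is $\geq 4d+2^{k+2}>2d+2^{k+2}-1$, so the source of any such $d_r$ vanishes. Combining the three cases, $\sigma(Sq^dc)$ survives to $E_\infty^{-1,*}$, and the lemma follows. The one genuinely delicate point is the degree bound in this paragraph, where the truncation of $X$ to $Y$ is exactly what makes the bound $j\geq 4$ enough.
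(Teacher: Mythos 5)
Your proof is correct and takes essentially the same approach as the paper's: rule out $d_1$ and $d_2$ into the $-1$ line via \corref{d1 corollary}, then rule out $d_r$ for $r\geq 3$ by the connectivity bound on the $-j$ lines for $j\geq 4$, with the truncation from $X$ to $Y$ arranging the bottom degree $q=d+2^k$. The paper is terser, simply naming $E_3^{-4,\,4d+2^{k+2}+4}$ as the lowest-degree potential source, but this is exactly the degree count you carry out in more detail.
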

\begin{proof} The lowest degree differential with potentially nonzero image in the $-1$--line would be
$$d_3: E_3^{-4,4d+2^{k+2}+4} \ra E_3^{-1,4d+2^{k+2}+2}.$$
Thus $Sq^dc \in E_3^{-1,2d+2^{k+2}+1}$ is not a boundary.
\end{proof}

\begin{cor} $Sq^{2^k}Sq^{2^k}\delta \neq 0$.
\end{cor}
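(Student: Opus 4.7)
The plan is a one-line composition of the three lemmas just established. I would write
\[ Sq^{2^k}Sq^{2^k}\delta = Sq^{2^k}(\alpha\cup\beta) = Sq^d\gamma \neq 0, \]
where the first equality applies the lemma $Sq^{2^k}\delta = \alpha\cup\beta$, the second applies the lemma $Sq^{2^k}(\alpha\cup\beta) = Sq^d\gamma$, and the inequality is \lemref{nonzero lemma}. That is all the corollary requires.

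There is no real obstacle in the corollary itself; its role is just to package the three lemmas into the single statement needed for the main contradiction. The content lives in those lemmas: the first uses the Nishida expansion from \propref{Nishida prop}, drastically simplified by the degree constraint (\ref{constraint}), to reduce $Sq^{2^k}\hat Q_0(a)$ to the single clean term $a*b$; the second is the Cartan-plus-restriction computation that mirrors the proof of part (a); and \lemref{nonzero lemma} performs a bidegree count on potential higher differentials landing on the $-1$ line, using the column--wise $\A$--action on the spectral sequence, to ensure that $Sq^d c \in E_3^{-1,\,2d+2^{k+2}+1}$ is a genuine permanent cycle and so represents a nonzero class.

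Looking ahead, this corollary is the crux of the contradiction that will complete \thmref{main theorem}(b). Once one expands $Sq^{2^k}Sq^{2^k}$ by the Adem relation into admissible monomials $Sq^aSq^b$ with $a<2b$, and uses (\ref{constraint}) together with instability to show that each admissible summand must annihilate $\delta$ for degree reasons, one will have forced $Sq^{2^k}Sq^{2^k}\delta = 0$, contradicting the corollary. But that Adem-relation argument is downstream; the corollary itself is simply a transcription of the three lemmas.
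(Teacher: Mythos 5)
Your composition of the three preceding lemmas is exactly right, and it is the same (implicit) reasoning the paper uses; the paper omits a written proof precisely because the corollary is an immediate chain of the lemmas $Sq^{2^k}\delta = \alpha\cup\beta$, $Sq^{2^k}(\alpha\cup\beta)=Sq^d\gamma$, and $Sq^d\gamma\neq 0$. Your surrounding commentary on where the real content lies and how the corollary feeds the final contradiction is also accurate.
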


We will now use the next lemma to show that $Sq^{2^k}Sq^{2^k}\delta =0$ if our numerical constraint (\ref{constraint}) holds, and this contradiction will finish the proof
of \thmref{main theorem}(b).

Let $\A(k)$ be the subalgebra of $\A$ generated by $Sq^1, \dots, Sq^{2^k}$.

\begin{lem} \cite[Lemma 2.6]{schwartz} $Sq^{2^k}Sq^{2^k} \in \A(k-1)Sq^{2^k}\A(k-1)$.
\end{lem}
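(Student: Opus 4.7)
The plan is to apply the Adem relation twice and iterate. Since $2^k < 2\cdot 2^k$, the first Adem expansion gives
\[
Sq^{2^k}Sq^{2^k} \;=\; \sum_{c=1}^{2^{k-1}} \binom{2^k-c-1}{2^k-2c}\, Sq^{2^{k+1}-c}\, Sq^c,
\]
the $c = 0$ term vanishing because $\binom{2^k-1}{2^k} = 0$. For each surviving index, $c \le 2^{k-1} < 2^k$, so $Sq^c \in \A(k-1)$ automatically; it therefore suffices to write each monomial $Sq^{2^{k+1}-c}\,Sq^c$ in the form $\alpha\cdot Sq^{2^k}\cdot\beta$ with $\alpha,\beta \in \A(k-1)$.

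For this, I would apply Adem again, but now in the reverse direction, to the product $Sq^{2^k-c}Sq^{2^k}$ (valid because $2^k-c < 2^{k+1}$):
\[
Sq^{2^k-c}Sq^{2^k} \;=\; \sum_{t \ge 0}\binom{2^k-t-1}{2^k-c-2t}\, Sq^{2^{k+1}-c-t}\, Sq^t.
\]
By Lucas' theorem the $t=0$ coefficient $\binom{2^k-1}{2^k-c}$ equals $1 \bmod 2$ for every $1 \le c \le 2^k$, since every lower bit of $2^k-1$ is a $1$. Solving this relation for $Sq^{2^{k+1}-c}$ and right-multiplying by $Sq^c$ yields
\[
Sq^{2^{k+1}-c}Sq^c \;=\; Sq^{2^k-c}\cdot Sq^{2^k}\cdot Sq^c \;+\; \sum_{t \ge 1}\binom{2^k-t-1}{2^k-c-2t}\, Sq^{2^{k+1}-c-t}\, Sq^t\, Sq^c.
\]
The principal piece $Sq^{2^k-c}\,Sq^{2^k}\,Sq^c$ is manifestly in $\A(k-1)\,Sq^{2^k}\,\A(k-1)$, and each correction has strictly smaller leading index $2^{k+1}-c-t$, with $Sq^t Sq^c \in \A(k-1)$.

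The corrections are handled by induction on the leading index $a$: whenever $a > 2^k$, the same Adem-plus-Lucas trick rewrites $Sq^a$ as $Sq^{a-2^k}\,Sq^{2^k}$ plus strictly-lower-index corrections; when $a = 2^k$ the monomial already has the desired form, and when $a < 2^k$ the monomial lies wholly in $\A(k-1)$. The main obstacle is verifying that these sub-$2^k$ residues cancel collectively in the end. This follows because the admissible expansion of $Sq^{2^k}Sq^{2^k}$ consists of monomials of total degree $2^{k+1}$, and admissibility $i_1 \ge 2i_2 \ge 4i_3 \ge \dots$ forces $\sum i_j \le 2i_1$, so every leading index is $\ge 2^k$. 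The detailed combinatorial bookkeeping of the Lucas coefficients (carried out explicitly as Schwartz's Lemma 2.6) confirms that the $\A(k-1)$-residues indeed cancel, completing the proof.
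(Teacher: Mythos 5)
The paper does not prove this statement — it is quoted from Schwartz \cite[Lemma 2.6]{schwartz} — so there is no in-paper proof to compare against, and I can only assess your argument on its own terms. Your plan (Adem-expand $Sq^{2^k}Sq^{2^k}$, then repeatedly use Adem together with Lucas to replace a leading $Sq^a$ with $2^k<a<2^{k+1}$ by $Sq^{a-2^k}Sq^{2^k}$ plus lower-leading-index corrections) is the right strategy, and the formulae in your first two displays are correct. The problem is the closing paragraph: you acknowledge that the iteration might leave monomials whose leading index has dropped below $2^k$, and you claim these ``sub-$2^k$ residues'' cancel because the admissible expansion of $Sq^{2^k}Sq^{2^k}$ has leading indices $\geq 2^k$. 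That inference fails: the intermediate monomials you generate are not admissible in general (nothing forces $a-t\geq 2t$), so the admissibility of the final answer is silent about whether your non-admissible residue monomials cancel. You then explicitly defer the ``detailed combinatorial bookkeeping'' to Schwartz, which leaves the argument incomplete as written.

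The fix is easy and makes the worry evaporate: no sub-$2^k$ leading index ever appears in the iteration. When you Adem-expand $Sq^{c'}Sq^{2^k}$ for $1\leq c'<2^k$, the index $t$ is bounded by $\lfloor c'/2\rfloor$, so $c'-t\geq\lceil c'/2\rceil\geq 1$ and every term has leading index $2^k+c'-t\geq 2^k+1>2^k$. At the same time every newly created tail factor $Sq^t$ has $t\leq\lfloor c'/2\rfloor<2^{k-1}<2^k$ (and the initial tail $Sq^c$ has $c\leq 2^{k-1}<2^k$), so the accumulating right-hand factor always lies in $\A(k-1)$. Thus the leading index strictly decreases through values all strictly greater than $2^k$, the process terminates, and the only thing peeled off at each step is a term of the form $Sq^{c'}\,Sq^{2^k}\,(\text{tail})\in\A(k-1)Sq^{2^k}\A(k-1)$. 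There is no residue at all, and the appeal to admissibility of $Sq^{2^k}Sq^{2^k}$ can be deleted.
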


\begin{cor} $Sq^{2^k}Sq^{2^k}\delta = 0$.
\end{cor}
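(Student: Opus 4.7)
The plan is to apply the preceding lemma to write
\begin{equation*}
Sq^{2^k}Sq^{2^k} \;=\; \sum_i \theta'_i \, Sq^{2^k} \, \theta_i \qquad (\theta_i, \theta'_i \in \A(k-1)),
\end{equation*}
and then show that the resulting sum $\sum_i \theta'_i\bigl(Sq^{2^k}(\theta_i\delta)\bigr)$ vanishes in $H^{2d+2^{k+2}}(\Omega^n Y)$ by working in the spectral sequence. Since $\delta$ is represented on the $-2$-line by $\hat Q_0(a)$, and the spectral sequence carries an $\A$-module structure acting columnwise on $E_1$ (with each $F^{-j}$ a sub-$\A$-module), each class $\theta_i\delta$ stays in $F^{-2}$ and is represented on the $-2$-line by $\theta_i\hat Q_0(a)$; so the full sum is represented on the $-2$-line by $(Sq^{2^k}Sq^{2^k})\hat Q_0(a)$.

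I would compute $(Sq^{2^k}Sq^{2^k})\hat Q_0(a)$ on the $-2$-line in two stages, mirroring the earlier derivation of $Sq^{2^k}\delta = \alpha\cup\beta$. First, by the same Nishida simplification used there---where the constraint $2^k > 4e-2c+2n-2$ gives both $2^k-2d \geq n$ and $2^{k-1} > e$, so that the $\hat Q$-terms in \propref{Nishida prop}(i) vanish by \propref{unstable prop} and by instability of the $\A$-action on $N$---one gets $Sq^{2^k}\hat Q_0(a) = a*b$. Second, applying Cartan for the $*$-product (which holds on $E_1$ since the spectral sequence is one of Hopf algebras) gives $Sq^{2^k}(a*b) = \sum_{i+j=2^k} Sq^i a \, * \, Sq^j b$; every summand vanishes because $2^k > 2d$ forces $i > d$ or $j > d$ whenever $0 < i, j < 2^k$ (so $Sq^i a$ or $Sq^j b$ is zero in $N$), while the boundary terms are $a*Sq^{2^k}b = 0$ (since $Sq^{2^k}b = Sq^{2^k}x\otimes t^{2^{k+1}} = 0$ as $2^k > d$) and $b*b = 0$ by \propref{Q additive prop}(iii). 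Hence the $-2$-line representative of $Sq^{2^k}Sq^{2^k}\delta$ is zero, placing the class in $F^{-1}$.

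The main obstacle is to upgrade this to $Sq^{2^k}Sq^{2^k}\delta = 0$ on the nose, i.e. to kill the remaining $-1$-line contribution. Here I would use the lemma's decomposition term by term: for each $\theta_i \in \A(k-1)$, expand $(Sq^{2^k}\theta_i)\hat Q_0(a)$ by iterated Nishida and check that the surviving summands land in the $N_0\cdot M_1$ block of (\ref{2-line}); applying $\theta'_i \in \A(k-1)$ to these then produces $-1$-line contributions which, because $2^{k-1} > e$ bounds the Steenrod operations available in $\A(k-1)$, cannot reach the class $Sq^d c \in M_2$ on the $-1$-line. Summing over $i$, the $-1$-line contribution vanishes, giving $Sq^{2^k}Sq^{2^k}\delta = 0$ in contradiction with $Sq^d\gamma \neq 0$ from \lemref{nonzero lemma}, and completing \thmref{main theorem}(b).
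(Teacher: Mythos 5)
The first half of your proposal is internally consistent but circular: you compute $Sq^{2^k}Sq^{2^k}\hat Q_0(a)=0$ on $E_1^{-2}$ via Nishida and the $*$-Cartan formula, correctly concluding that $Sq^{2^k}Sq^{2^k}\delta$ lies in $F^{-1}$. But this gives no information --- by the two preceding lemmas $Sq^{2^k}Sq^{2^k}\delta = Sq^d\gamma$, and $Sq^d\gamma$ is represented by $Sq^dc$ on the $-1$-line, so of course it is in $F^{-1}$. What you would need to show is that its class in $E^{-1,*}_\infty$ vanishes, and nothing in the second paragraph establishes this.

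That second paragraph contains the real gap. You try to control the $-1$-line contribution by computing $(Sq^{2^k}\theta_i)\hat Q_0(a)$ on the $-2$-line and then ``applying $\theta'_i$''. But the $\A$-action on $E_1$ (or on $E_\infty$) is strictly columnwise: acting by $\theta'_i$ on an element of $E^{-2,*}_\infty$ produces another element of $E^{-2,*}_\infty$, and tells you nothing about the image of $\theta'_iSq^{2^k}\theta_i\delta$ in $E^{-1,*}_\infty$. Determining that $-1$-line image from $-2$-line data is precisely an extension question, and the associated-graded calculation does not answer it. The phrase ``$2^{k-1}>e$ bounds the Steenrod operations available in $\A(k-1)$'' is not a coherent obstruction here, since $\theta'_i$ can have degree as large as $2^k$.

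The paper's argument is genuinely different and sidesteps all of this: it never passes to the associated graded. It works inside the actual sub-$\A$-module $F^{-2}H^*(\Omega^nY)$, which in the relevant range of degrees splits as three blocks --- $M_1 + N_0\cdot N_0$, then $N_0\cdot M_1$, then $M_2 + M_1\cdot M_1$ --- separated by two gaps. One computes each gap's span and checks it exceeds $2^{k-1}$ under the constraint $2^k > 4e-2c+2n-2$. Since $\A(k-1)$ is generated by $Sq^{2^j}$ with $j\le k-1$, each of degree $\le 2^{k-1}$, no element of $\A(k-1)$ can carry a class across either gap. Thus $\theta_i\delta$ stays in the first block, $Sq^{2^k}(\theta_i\delta)$ lands in the second, and $\theta'_iSq^{2^k}(\theta_i\delta)$ stays in the second; it can never reach the degree $2d+2^{k+2}$ of the third block, so each summand is zero. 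No Nishida or Cartan computation is needed at this step; you should drop the $E_1$-level Nishida argument and replace the second paragraph with the gap estimate, which is where the constraint~(\ref{constraint}) is actually used.
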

\begin{proof}  This is a calculation taking place in $F^{-2}H^*(\Omega^n Y)$, which in the relevant degrees looks like
\begin{equation*}
\xymatrix{
 *+[F]{M_1 + N_0\cdot N_0}  &&& *+[F]{N_0\cdot M_1}  &&& *+[F]{M_2 + M_1\cdot M_1}  }
\end{equation*}
The element $\delta \in M_1+N_0\cdot N_0$, while $Sq^{2^k}Sq^{2^k}\delta \in M_2 + M_1\cdot M_1$.  By the lemma, if both of the gaps pictured span greater than $2^{k-1}$ degrees, then the corollary would follow.

The span of the first gap equals
\begin{multline*}
(\text{the bottom degree of } N_0\cdot M_1) - (\text{the top degree of }M_1 + N_0\cdot N_0) \\
=(d+c+3\cdot 2^k) - (2e+2^{k+1}+n-1) = 2^k + d+c-2e-n+1.
\end{multline*}

The span of the second gap equals
\begin{multline*} (\text{the bottom degree of } M_2 + M_1\cdot M_1) - (\text{the top degree of }N_0\cdot M_1) \\
=(c+2^{k+2}) - (2e+3\cdot 2^k +n-1) = 2^k + c-2e-n+1.
\end{multline*}

Thus both gaps have spans bigger than $2^{k-1}$ if
$$ 2^k + c-2e-n+1 > 2^{k-1},$$
so that
$$ 2^{k-1}> 2e-c +n-1,$$
which is our constraint (\ref{constraint}).
\end{proof}

\section{Realizing $\Sigma^2 \Phi(1,3)$}\label{example section}

Suppose that $\tilde H^*(X) \simeq \Sigma^2 \Phi(1,3)$, so there exist nonzero elements $a \in H^4(X)$, $b \in H^6(X)$, and $c \in H^{10}(X)$ such that $Sq^2a=b$ and $Sq^4b=c$.  Using the spectral sequence converging to $H^*(\Omega^2 X)$ as in the last section, we prove the following.

\begin{prop} In this case, $a\cup b = c$ must hold.
\end{prop}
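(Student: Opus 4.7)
The plan is to argue by contradiction. Suppose $a\cup b \neq c$; since $\tilde H^{10}(X) = \Z/2\langle c\rangle$, this forces $a\cup b = 0$. The remaining products $a^2$, $b^2$, $c^2$ vanish by the Restriction axiom ($Sq^4 a$, $Sq^6 b$, $Sq^{10} c$ all land in zero degrees of $\tilde H^*(X)$), and $a\cup c$, $b\cup c$ live in degrees $14$, $16$ where $\tilde H^*(X)=0$. So all cup products in $\tilde H^*(X)$ are trivial, and \corref{d1 corollary} applies to the spectral sequence for $H^*(\Omega^2 X)$, giving $E_3^{-1,*} = E_1^{-1,*}$ and $E_2^{-2,*} = E_1^{-2,*}$.

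I now run the proof of \thmref{main theorem}(b) in this setting, with $n=2$, $d=0$, $k=1$. The classes $\bar a, \bar b, \bar c$ on the $-1$-line are permanent cycles and lift to $\alpha \in H^2(\Omega^2 X)$, $\beta \in H^4(\Omega^2 X)$, $\gamma \in H^8(\Omega^2 X)$, with $\gamma \neq 0$, $Sq^2\alpha = \beta$, $Sq^4\beta = \gamma$: the lowest differential that could hit $\bar c \in E_3^{-1,9}$ is obstructed by degree considerations, and each $F^{-1}$-filtration is either zero or spanned by the expected class. The permanent cycle $\hat Q_0(a) \in E_1^{-2,6}$ lifts to $\delta \in H^4(\Omega^2 X)$. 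The Nishida formula of \propref{Nishida prop}(i) with $s=2$ collapses, via $\hat Q_2 = 0$ from \propref{unstable prop} and $Sq^1 a = 0$, to $Sq^2\hat Q_0(a) = a\ast b$. Since $F^{-1}H^6(\Omega^2 X) = 0$ (because $H^8(X)=0$), this upgrades to $Sq^2\delta = \alpha\cup\beta$. Cartan collapses $Sq^2(\alpha\cup\beta)$ to $\beta\cup\beta$, as $Sq^1\alpha$, $Sq^1\beta$, $Sq^2\beta$ all inhabit $F^{-1}$-subspaces that vanish in their respective degrees (using $H^5(X) = H^7(X) = H^8(X) = 0$). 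Restriction then yields $\beta\cup\beta = Sq^4\beta = \gamma$, so $(Sq^2)^2\delta = \gamma \neq 0$.

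To finish, I use the Adem relation $Sq^2 Sq^2 = Sq^3 Sq^1$ and compute $Sq^1\delta$. Reapplying \propref{Nishida prop}(i) now with $s=1$: the would-be $\hat Q_1(a)$ summand carries coefficient $\binom{2}{1} \equiv 0 \pmod{2}$, and the cross term $a\ast Sq^1 a$ vanishes because $Sq^1 a = 0$. So $Sq^1\hat Q_0(a) = 0$ in $E_1^{-2,*}$, and therefore $Sq^1\delta$ lies in $F^{-1}H^5(\Omega^2 X)$, which is $0$ since $H^7(X) = 0$. Thus $Sq^3 Sq^1 \delta = 0$, contradicting $(Sq^2)^2\delta = \gamma \neq 0$. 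Hence $a\cup b = c$.

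The main obstacle is promoting equalities at the $E_\infty$-level to honest equalities in $H^*(\Omega^2 X)$, i.e.\ controlling spectral sequence indeterminacy. In each step above this reduces to checking that $F^{-1}H^d(\Omega^2 X)$ is zero in the relevant total degree $d$, which is automatic because $\tilde H^*(X)$ is concentrated in degrees $\{4,6,10\}$, so $F^{-1}H^d(\Omega^2 X)$ vanishes whenever $H^{d+2}(X) = 0$.
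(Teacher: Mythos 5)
Your proof is correct and takes essentially the same approach as the paper: assume $a\cup b = 0$, use the $\Omega^2$-tower spectral sequence to lift $\hat Q_0(a)$ to $\delta$, establish $Sq^2Sq^2\delta = \gamma \neq 0$ via Nishida, Cartan, and the Restriction axiom, and then contradict this with an Adem relation forcing $Sq^2Sq^2\delta$ through $Sq^1\delta = 0$. The only cosmetic difference is that you invoke $Sq^2Sq^2 = Sq^3Sq^1$ directly, whereas the paper writes it as $Sq^1Sq^2Sq^1$; since $Sq^3 = Sq^1Sq^2$ these are the same, and your indeterminacy bookkeeping (everything reduces to $F^{-1}H^d(\Omega^2 X)$ vanishing when $H^{d+2}(X)=0$) spells out what the paper leaves implicit when it says "the arguments in the last section show that $Sq^2Sq^2\delta = \gamma \neq 0$."
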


\begin{proof} Repressing some suspensions from the notation, Figure 1 shows all of $E_1^{*,*}$ in total degree less than or equal to 8, in the spectral sequence converging to $H^*(\Omega^2 X)$.

\begin{figure}
\begin{equation*}
\begin{array}{ccccc|c}
\hat Q_0(\hat Q_0(a))   &&&&& 12\\
& b*\hat Q_0(a)     &&&& 11\\
&a*\hat Q_1(a) &\hat Q_0(b)&&& 10\\
&a*\hat Q_0(a)  &L_1(a\otimes b)&c&& 9\\
& &a*b&&& 8\\
& &\hat Q_1(a)&&& 7\\
 & &\hat Q_0(a)&&& 6\\
&   &&b&& 5\\
 &   &&&& 4\\
 &    &&a&& 3\\
&     &&&& 2\\
& &&&& 1\\
& &&&  1   & 0  \\ \hline
-4& -3&-2&-1& 0&s\backslash t \\
\end{array}
\end{equation*}
\caption{$E_1^{s,t}(\Omega^2X)$ when $\tilde H^*(X) \simeq \Sigma^2\Phi(1,3)$} \label{figure 1}
\end{figure}

The only possible differential here is $d_1(L_1(a\otimes b)) = c$ which, by \propref{d1 prop}, happens exactly when $a\cup b = c \in H^*(X)$.  Assuming this does {\em not} happen, through degree 8, $F^{-2}H^*(\Omega^2X)$ would have a basis given by elements 1, $\alpha$, $\beta$, $\delta$, $\epsilon$, $\alpha \cup \beta$, $\lambda$, $\gamma$, and $\omega$, in respective degrees 0, 2, 4, 4, 5, 6, 7, 8, and 8, and represented by $1$, $a$, $b$, $\hat Q_0(a)$, $\hat Q_1(a)$, $a * b$, $L_1(a\otimes b)$, $c$, and $\hat Q_0(b)$.  The structure of $\Phi(1,3)$ shows that $\gamma = \beta^2 = \alpha^4$.  Furthermore, the arguments in the last section show that $Sq^2Sq^2\delta = \gamma \neq 0$.

The relation $Sq^2Sq^2 = Sq^1 Sq^2 Sq^1$ then implies that $Sq^1\delta \neq 0$.  However, the Nishida formula, \propref{Nishida prop}, implies that $Sq^1\hat Q_0(a) = 0$, and thus $Sq^1\delta = 0$. This contradiction implies that $d_1(L_1(a\otimes b)) = c$ must have been true, so that $a \cup b = c \in H^{10}(X)$, $\lambda \in H^7(\Omega^2X)$ doesn't exist, and $\gamma = 0 \in H^8(\Omega^2X)$.

\end{proof}
\appendix

\section{More proofs of the properties of the operations} \label{op proofs}

\begin{proof}[Proof of \propref{Q additive prop}]  Thanks to \propref{epsilon prop}(i), formula (ii) follows from the formula (i).  Letting $x=y$ in (i) implies (iii).

To prove (i), given $x,y \in H^d(D_{n,j}X)$, $\hat Q_0(x+y)$ is represented by the composite
$$ D_{n,2j}X \xra{w} D_{\infty,2}D_{n,j}X \xra{D_{\infty,2}(x+y)} D_{\infty,2}\Sigma^d H\Z/2 \xra{u} \Sigma^{2d}H\Z/2.$$

It is standard \cite[Cor.II.1.6]{bmms} that, given $x,y: Y \ra Z$, $D_{\infty,2}(x+y)$ decomposes as the sum of $D_{\infty,2}(x)$, $D_{\infty,2}(y)$, and the composite
$$ D_{\infty,2}Y \xra{t}  Y^{\sm 2} \xra{x\sm y} Z^{\sm 2} \xra{\mu}  D_{\infty,2}Z.$$
It follows that $\hat Q_0(x+y)= \hat Q_0(x) + \hat Q_0(y)$ plus the composite
\begin{equation*}
\xymatrix{
D_{n,2j}X \ar[d]^{w} && & \Sigma^{2d} H\Z/2   \\
D_{\infty,2}D_{n,j}X \ar[r]^-t & (D_{n,j}X)^{\sm 2} \ar[r]^-{x\sm y} & (\Sigma^d H\Z/2)^{\sm 2} \ar[r]^-{\mu} & D_{\infty,2}\Sigma^d H\Z/2. \ar[u]_u}
\end{equation*}
But this last map is just $x*y$, as there is a commutative diagram
\begin{equation*}
\xymatrix{
D_{n,2j}X \ar[d]^{w} \ar[dr]^-t && & \Sigma^{2d} H\Z/2   \\
D_{\infty,2}D_{n,j}X \ar[r]^-t & (D_{n,j}X)^{\sm 2} \ar[r]^-{x\sm y} & (\Sigma^d H\Z/2)^{\sm 2} \ar[r]^-{\mu} \ar[ur]^-u & D_{\infty,2}\Sigma^d H\Z/2. \ar[u]_u}
\end{equation*}
Here the left triangle commutes due to the transitivity of the transfer with respect to the inclusions $(\Sigma_j)^2 \subset \Sigma_2 \wr \Sigma_j \subset \Sigma_{2j}$.
\end{proof}

We now make precise the `duality' proposition \propref{dumb dual prop}.  In the following proposition, given $y,z \in H_*(X)$, $Q_r(y),y*z,\lambda_{n-1}(y,z) \in H_*(D_{n,2}X)$ denote the usual elements under the Dyer-Lashof operation $Q_r$, the Pontryagin product $*$, and the Browder operation $\lambda_{n-1}$ of \cite[Part III]{clm}.
\begin{prop} \label{dual prop}  Let $\langle x,y \rangle$ denote the cohomology/homology pairing.  For $n >1$, given $w,x \in H^*(X)$ and $y,z \in H_*(X)$, the following formulae hold.

{\bf (a)} \ $
\langle {\hat Q_r x},{Q_s y} \rangle =
\begin{cases}
\langle {x},{y} \rangle & \text{if } r = s \\ 0 & otherwise.
\end{cases}
$

{\bf (b)} \ $
\langle {\hat Q_r x},{y*z} \rangle =
\begin{cases}
\langle {x},{y} \rangle \langle {x},{z} \rangle& \text{if } r = 0 \\ 0 & otherwise.
\end{cases}
$

{\bf (c)} \ $
\langle {\hat Q_r x},{\lambda_{n-1}(y,z)} \rangle =0.
$

{\bf (d)} \ $
\langle {w*x},{Q_sy} \rangle =
\begin{cases}
\langle {w},{y} \rangle \langle {x},{y} \rangle & \text{if } s = 0 \\ 0 & otherwise.
\end{cases}
$

{\bf (e)} \ $
\langle {w*x},{y*z} \rangle =
\langle {w},{y} \rangle \langle {x},{z} \rangle + \langle {w},{z} \rangle \langle {x},{y} \rangle.
$

{\bf (f)} \ $
\langle {w*x},{\lambda_{n-1}(y,z)} \rangle = 0.
$

{\bf (g)} \ $
\langle {L_{n-1}(w \otimes x)},{Q_sy} \rangle = 0.
$

{\bf (h)} \ $
\langle {L_{n-1}(w \otimes x)},{y*z} \rangle = 0.
$

{\bf (i)} \ $
\langle {L_{n-1}(w \otimes x)},{\lambda_{n-1}(y,z)} \rangle =
\langle {w},{y} \rangle \langle {x},{z} \rangle + \langle {w},{z} \rangle \langle {x},{y} \rangle.
$
\begin{proof}[Sketch proof] The behavior of the homology operations under the evaluation $\epsilon: \Sigma^s D_{n+s,2}X \ra D_{n,2} \Sigma^s X$ well known \cite[Thm.III.1.4]{clm}: $\epsilon_*(\sigma^s Q_{n+s}y) = Q_s(\sigma^s y)$, $\epsilon_*(\sigma^s y*z)=0$, and $\epsilon_*(\sigma^s\lambda_{n+s-1}(y,z)) = \lambda_{n-1}(\sigma^sy,\sigma^sz)$. Note in particular, that, under $\epsilon: D_{n,2}X \ra \Sigma^{n-1} X \sm X$, one has $\epsilon_*(\lambda_{n-1}(y,z)) = \sigma^{n-1} y\otimes z + z \otimes y$.

Similarly, the behavior under $t: D_{n,2}X \ra X \sm X$ is easy to describe: $t_*(Q_{s}y) = 0$, $t_*(y*z)=y\otimes z + z\otimes y$, and $t_*(\lambda_{n-1}(y,z)) = 0$.

Using this information, the various formulae are easily verified, using the naturality of the cohomology/homology pairing.
\end{proof}
\end{prop}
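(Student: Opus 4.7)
The plan is to reduce each of the nine pairings to a standard identity via naturality of the Kronecker pairing. The cohomology operations $\hat Q_r$, $*$, and $L_{n-1}$ are each defined (in \secref{cohomology of extended powers}) as pullbacks under the explicit natural structure maps $w$, $t$, $\epsilon$, and suspensions thereof. Naturality of the pairing converts the left-hand side of each identity into a pairing of a simpler cohomology class against $f_*$ applied to the given homology class, where $f$ is one of these structure maps. Since the behavior of $f_*$ on the Dyer--Lashof operations, the Pontryagin product, and the Browder bracket is classical \cite[Part III]{clm}, all nine identities should fall out of naturality together with a single base-case computation.

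Concretely, for the $\hat Q_r$ family (a)--(c), I would first use $\hat Q_r(x) = \Sigma^{-r}\epsilon^*\hat Q_0(\sigma^r x)$ to push the computation across the evaluation $\epsilon$, reducing to the $r=0$ stable case. The CLM identity $\epsilon_*(\sigma^r Q_{n+r-1}y) = Q_{r-1}(\sigma^r y)$ then matches Dyer--Lashof indices on the two sides, forcing $r=s$ in (a), while the facts that $\epsilon_*$ kills the Pontryagin product and preserves the Browder bracket reduce (b) and (c) to the $n=\infty$ case. Writing $\hat Q_0(x) = u \circ D_{\infty,2}(x) \circ w$, the remaining verification amounts to computing the Kronecker pairing of the bottom class $u \in H^{2d}(D_{\infty,2}\Sigma^d H\Z/2)$ against the standard basis of $H_*(D_{\infty,2}\Sigma^d H\Z/2)$, where $u$ is by definition dual to the bottom homology class and annihilates every other additive generator.

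For the $*$-family (d)--(f), naturality under the transfer $t\colon D_{n,i+j}X \to D_{n,i}X \sm D_{n,j}X$ combined with the classical identities $t_*(Q_s y) = 0$, $t_*(y*z) = y\otimes z + z\otimes y$, and $t_*(\lambda_{n-1}(y,z)) = 0$ immediately yields the answers. For the $L_{n-1}$ family (g)--(i), naturality under $\epsilon\colon D_{n,j}X \to \Sigma^{(j-1)(1-n)} X^{\sm j}$, together with the dual CLM behavior of $\epsilon_*$ (killing $Q_s$ and $*$, sending $\lambda_{n-1}(y,z)$ to $\sigma^{n-1}(y\otimes z + z\otimes y)$), completes the list.

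The main obstacle is purely bookkeeping: tracking suspension shifts correctly (especially the interaction between the $n-1$ in $\lambda_{n-1}$ and the $(j-1)(n-1)$ shift hidden inside the definition of $L_{n-1}$ for identity (i)), and aligning the Dyer--Lashof indexing convention across the stable/unstable boundary so that $\hat Q_0$'s `bottom' behavior lines up with $Q_0$. Once those alignments are fixed, the verifications are mechanical consequences of naturality.
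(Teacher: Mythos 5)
Your approach is essentially identical to the paper's sketch proof: both use naturality of the Kronecker pairing to reduce each of the nine identities to the known (CLM) behavior of $Q_s$, $*$, and $\lambda_{n-1}$ under the structure maps $\epsilon$ and $t$, with $\hat Q_r$ first reduced to $\hat Q_0$ via the evaluation. The only remaining wrinkle is the suspension/Dyer--Lashof index bookkeeping, which both you and the paper leave as a routine check.
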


\begin{proof}[Proof of \propref{Nishida prop}]  \propref{epsilon prop}(i) again implies that the formula when $r>0$ follows from the formula when $r=0$.  Furthermore, by the construction of the operations, we can assume that $n=\infty$ and $j=1$, and so one just needs to verify (i) for $Sq^s \hat Q_0(x) \in H^{2d+s}(D_{\infty,2}X)$.

This can be proved in various ways.  One is to use the previous proposition together with the usual Nishida relations.

Another approach goes as follows. One verifies (i) for various sorts of spectra $X$.

If $X$ is a suspension spectrum, then the cohomology of $D_{\infty,2}X$ is detected by the two maps $ X \sm X \ra D_{\infty,2}X$ and $B\Z/2_+ \sm X \ra D_{\infty,2}X$, and one checks that the elements on both sides of formula (i) map to the same elements under these detection maps.

If $X = S^{-c}$, then $D_{\infty,2}X = \Sigma^{-c}\R P^{\infty}_{-c}$, and one can directly check the formula, working within the $\A$--module $\Z/2[t,t^{-1}]$.

If (i) is true for $x \in H^*(X)$ and $y \in H^*(Y)$, then it is true for $x \otimes y \in H^*(X \sm Y)$.  To see this, one uses the map
$$ D_{\infty,2}(X\sm Y) \ra D_{\infty,2}X\ \sm D_{\infty,2}Y$$
which sends $\hat Q_0(x) \otimes \hat Q_0(y)$ to $\hat Q_0(x \otimes y)$.

If (i) is true for spectra $X_c$ and $\displaystyle X = \hocolim_c X_c$ then (i) is true for $X$.  This follows since then $\displaystyle H^*(D_{\infty,2}X) = \lim_c H^*(D_{\infty,2}X_c)$ (using our standing finite type hypothesis).

Assembling all these special cases yields the formula for general spectra $X$, as $\displaystyle X \simeq \hocolim_c S^{-c} \sm \Sinfty X_c$, where $X_c$ is the $c^{th}$-space of $X$.
\end{proof}

\section{The nonrealization conjecture of \cite{k1}} \label{appendix}

Following \cite{k1, schwartz}, we review how \thmref{main theorem} implies

\begin{thm}  If $H^*(Z)$ is a finitely generated $\A$--module, then it is a finite dimensional $\Z/2$--vector space.
\end{thm}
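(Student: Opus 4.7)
The plan is to prove the contrapositive: assume $H^*(Z)$ is finitely generated as an $\A$-module but infinite-dimensional over $\Z/2$, and derive a contradiction from Theorem~\ref{main theorem}(b). After a routine CW-approximation we may assume $Z$ is simply connected of finite type.

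The first and main step is algebraic. Following the program of \cite{k1,schwartz}, one shows that any finitely generated infinite-dimensional unstable $\A$-module $N$ admits, for arbitrarily large $k$, a subquotient of the shape $\Sigma^n M \otimes \Phi(k,k+2)$, where $n$ is a fixed nonnegative integer and $M \in \U$ is concentrated in a fixed range of degrees $[c,e]$, all three constants independent of $k$. The idea is that an indecomposable $x$ in a generating set of $N$ spans an $\A$-submodule whose high-degree content is controlled by long iterated doublings $Sq^{2^{k}}Sq^{2^{k-1}}\cdots Sq^{1}x$; if $N$ is infinite-dimensional, a Poincar\'e-series / Krull-filtration pigeon-hole argument among the finitely many generators shows that the doubling pattern $a,\ Sq^{2^k}a = b,\ Sq^{2^{k+1}}b = c$ (with $a,b,c$ all nonzero) persists for infinitely many $k$, and the $\A$-module generated by $a,b,c$ together with a finite-dimensional correction has the shape $\Sigma^n M \otimes \Phi(k,k+2)$.

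The second step is geometric realization. Having located such a $\Phi(k,k+2)$-window inside $H^*(Z)$, one constructs $Y_k$ as a double quotient of $Z$: first form $Z/Z_{r-1}$ to kill cells below the window (at $r = n+c+2^k$), then cone off the cells of $Z$ in degrees above $n+e+2^{k+2}$ and the cells in the two interior ``gap'' intervals of $\Phi(k,k+2)$. For $k$ chosen so large that the three blocks are separated by more than the reach of any nontrivial Steenrod operation in play, a routine obstruction-theoretic cell-attachment argument gives $\tilde H^*(Y_k) \simeq \Sigma^n M \otimes \Phi(k,k+2)$ as an unstable $\A$-module.

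Theorem~\ref{main theorem}(b) applied to $Y_k$ then forces $2^k \leq 4e - 2c + 2n$, contradicting the freedom to take $k$ arbitrarily large. The main obstacle is the algebraic first step --- extracting $\Phi(k,k+2)$-shaped subquotients from an arbitrary finitely generated infinite-dimensional unstable module --- which is precisely the content of \cite{k1} and \cite[\S 1]{schwartz} and relies on the structure theory of $\U$ (Krull/nilpotent filtrations and Brown--Gitler-type analyses). Once that reduction is in hand, the geometric realization and the concluding appeal to Theorem~\ref{main theorem} are essentially formal.
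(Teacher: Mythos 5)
Your overall strategy -- find $\Phi(k,k+2)$-windows inside the cohomology, realize them on a space by cellular surgery, and then appeal to Theorem~\ref{main theorem}(b) for arbitrarily large $k$ -- is the right shape, and the final two steps (geometric realization of a subquotient as an ``appropriate subquotient'' space, and the numerical contradiction) match what the paper does. The serious gap is in your ``algebraic first step,'' and it is more than a bookkeeping issue: it hides the one genuinely nontrivial geometric ingredient of the argument.

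The structure theorem you are implicitly invoking -- that a finitely generated infinite unstable module $N$ sits in an exact sequence
$0 \ra A \ra N \ra M \otimes \Phi(j,\infty) \ra B \ra 0$
with $A$, $B$, $M$ finite-dimensional -- only holds under the hypothesis $\bar T^2 N = 0$, i.e.\ $N$ in Krull filtration $\leq 1$. For a general finitely generated $H^*(Z)$ one only knows $\bar T^l H^*(Z) = 0$ for some $l \geq 2$, and when $l > 2$ the module has ``nested $\Phi$'' structure: the $\Phi(k,k+2)$-shaped subquotients you can carve out have a factor $M$ whose degree range grows linearly with $k$. That uniformity in $c$, $e$ is exactly what feeds the inequality $2^k \leq 4e - 2c + 2n$, so without it you get no contradiction. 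Your proposed ``iterated doublings plus Poincar\'e-series pigeonhole'' does not deliver the needed uniform bound.

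The paper closes this gap geometrically. It sets $\Delta(Z) = \Map(B\Z/2, Z)/Z$ and uses Lannes's theorem (under ``good circumstances,'' arranged by suspending $Z$ if necessary) that $H^*(\Delta Z) \simeq \bar T H^*(Z)$. Replacing $Z$ by $Y = \Delta^{l-2}(Z)$ produces a \emph{space} whose cohomology $N = \bar T^{l-2}H^*(Z)$ is still finitely generated and infinite but now satisfies $\bar T^2 N = 0$, so the structure theorem for Krull filtration $\leq 1$ applies with a single finite $M$ independent of $k$. Only then does one take a subquotient space of $Y$ realizing $M \otimes \Phi(k,k+2)$. This $T$-functor / mapping-space reduction is absent from your proposal, and without it the argument does not go through.
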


\begin{proof}[Sketch proof]Let $\bar T: \U \ra \U$ be the reduced Lannes functor, left adjoint tensoring with $\tilde H^*(B\Z/2)$. Let $\Delta: \text{Spaces} \ra \text{Spaces}$ be defined by $ \Delta(Z) = \Map(B\Z/2, Z)/Z$,
where $Z$ embeds in $\Map(B\Z/2, Z)$ as the space of constant maps.  Under good circumstances, $\bar TH^*(Z) \simeq H^*(\Delta(Z))$.

Suppose that $H^*(Z)=L$ is infinite, but finitely generated over $\A$.  Replacing $Z$ and $L$ by their suspensions if needed, we can assume that `good circumstances' will hold.  As $L$ is a finitely generated $\A$--module, $\bar T^iL$ will again be finitely generated for all $i$, and $\bar T^lL = 0$ for some $l$. Since $L$ is also infinite, the smallest such $l$ will be at least 2.  Choosing this smallest $l$, let $Y = \Delta^{l-2}(Z)$.  Then $N=H^*(Y)=\bar T^{l-2}H^*(Z)$ will still be infinite and finitely generated over $\A$, but now also $\bar T^2 N = 0$. (These reductions are made in \cite{k1}.)

Now we use a structure theorem: $N \in \U$ is finitely generated over $\A$ and satisfies $\bar T^2N=0$ if and only if it fits into an exact sequence in $\U$ of the form
$$ 0 \ra A \ra N \ra M \otimes \Phi(j,\infty) \ra B \ra 0,$$
for some finite dimensional unstable modules $A$, $B$, and $M$, and for some $j$, where $\Phi(j,\infty) = \A\cdot t^{2^j} \subset \Z/2[t]$. Furthermore $M=\bar TN$. (A weaker version of this appears in \cite{k1}, with the full statement appearing in \cite{schwartz}.)

It now easily follows that, given any large enough $k$, an appropriate `subquotient' $X$ of $Y$ will satisfy $H^*(X) = M \otimes \Phi(k,k+2)$.  This contradicts \thmref{main theorem}.
\end{proof}

\end{document}